\newtheorem{theorem}{Theorem}[section]
\newtheorem{proposition}[theorem]{Proposition}
\newtheorem{example}[theorem]{Example}
\newtheorem{assumption}[theorem]{Assumption}
\newtheorem{definition}[theorem]{Definition}
\newtheorem{corollary}[theorem]{Corollary}
\theoremstyle{remark}
\newtheorem{remark}[theorem]{Remark}
\newcommand{\calE}{\ensuremath{\mathcal{E}}}
\newcommand{\calF}{\ensuremath{\mathcal{F}}}
\newcommand{\calG}{\ensuremath{\mathcal{G}}}
\newcommand{\calL}{\ensuremath{\mathcal{L}}}
\newcommand{\calR}{\ensuremath{\mathcal{R}}}
\newcommand{\calO}{\ensuremath{\mathcal{O}}}
\newcommand{\calS}{\ensuremath{\mathcal{S}}}
\newcommand{\calT}{\ensuremath{\mathcal{T}}}
\newcommand{\calH}{\ensuremath{\mathcal{H}}}
\newcommand{\calC}{\ensuremath{\mathcal{C}}}
\newcommand{\gln}{\ensuremath{\operatorname{GL}}}
\newcommand{\mo}{{-1}}
\newcommand{\scrC}{\ensuremath{\mathscr{C}}}
\newcommand{\scrL}{\ensuremath{\mathscr{L}}}
\newcommand{\bbZ}{\ensuremath{\mathbb{Z}}}
\newcommand{\bbC}{\ensuremath{\mathbb{C}}}
\newcommand{\bbR}{\ensuremath{\mathbb{R}}}
\newcommand{\bbN}{\ensuremath{\mathbb{N}}}
\begin{document}

 \title{On non-minimal complements}
 
\author{Arindam Biswas}
\address{Department of Mathematics, Technion - Israel Institute of Technology, Haifa 32000, Israel}
\curraddr{}
\email{biswas@campus.technion.ac.il}
\thanks{}

\author{Jyoti Prakash Saha}
\address{Department of Mathematics, Indian Institute of Science Education and Research Bhopal, Bhopal Bypass Road, Bhauri, Bhopal 462066, Madhya Pradesh,
India}
\curraddr{}
\email{jpsaha@iiserb.ac.in}
\thanks{}

\subjclass[2010]{11B13, 05E15, 05B10, 11P70}

\keywords{Additive complements, minimal complements, sumsets, representation of integers, additive number theory}

\begin{abstract}
The notion of minimal complements was introduced by Nathanson in 2011. Since then, the existence or the inexistence of minimal complements of sets have been extensively studied. Recently, the study of inverse problems, i.e., which sets can or cannot occur as minimal complements has gained traction. For example, the works of Kwon, Alon--Kravitz--Larson, Burcroff--Luntzlara and also that of the authors, shed light on some of the questions in this direction. These works have focussed mainly on the group of integers, or on abelian groups.  In this work, our motivation is two-fold:
\begin{enumerate}
	\item to show some new results on the inverse problem,
	\item to concentrate on the inverse problem in not necessarily abelian groups.
\end{enumerate} As a by-product, we obtain new results on non-minimal complements in the group of integers and more generally, in any finitely generated abelian group of positive rank and in any free abelian group of positive rank. Moreover, we show the existence of uncountably many subsets in such groups which are ``robust'' non-minimal complements.
\end{abstract}

\maketitle

\section{Introduction}
\subsection{Motivation}
 Given two nonempty subsets $A, B$ of a group $G$, the set $A$ is said to be a left (resp. right) complement to $B$ if $A \cdot B = G$ (resp. $B\cdot A = G$). If $A$ is a left (resp. right) complement to $B$ and no subset of $A$ other than $A$ is a left (resp. right) complement to $B$, then $A$ is said to be a minimal left (resp. right) complement to $B$. The study of minimal complements began with Nathanson in \cite{NathansonAddNT4}, who introduced the notion in the context of additive number theory as a natural arithmetic analogue of the metric concept of nets. Since then, most of the literature about minimal complements have focussed on the direct problem about which sets admit minimal complements, see the works of Chen--Yang \cite{ChenYang12}, Kiss--S\'{a}ndor--Yang \cite{KissSandorYangJCT19}, of the authors \cite{MinComp1}, \cite{MinComp2} etc. Recently, the study of inverse problems, i.e., which sets occur as minimal complements, has become popular. The works of Kwon \cite{Kwon}, Alon--Kravitz--Larson \cite{AlonKravitzLarson}, Burcroff--Luntzlara \cite{BurcroffLuntzlara} and also of the authors \cite{CoMin1, CoMin2, CoMin3} have investigated this direction of research. However, most of the literature till date, has focussed on abelian groups. In this work, our motivation is two-fold:
\begin{enumerate}
	\item To show some new results on the inverse problem.
	\item To concentrate on the inverse problem in not necessarily abelian or finite groups.
\end{enumerate}

In \cite[Theorem C]{CoMin1}, it has been proved that the ``large'' subsets of a group cannot be a minimal complement to any subset. In \cite{AlonKravitzLarson}, Alon--Kravitz--Larson have established several interesting results which includes the above statement in the context of finite abelian groups. For any group $G$, \cite[Theorem C]{CoMin1} states that a subset $C$ of $G$, other than $G$, is not a minimal complement in $G$ if $C$ is ``large'' in the sense that 
\begin{equation}
\label{Eqn:BSRel}
\frac{|C| }{|G\setminus C|} > 2.
\end{equation}
In \cite[Theorem C]{CoMin1}, the set $G\setminus C$ was assumed to be finite. A refined version of this result in the context of finite abelian groups is established in  \cite[Proposition 17]{AlonKravitzLarson}, which states that a subset $C$ of a finite abelian group $G$, contained in a subgroup $H$, is not a minimal complement in $G$ if $C$ is ``large'' in the sense that 
$$
\frac{2|G||H| } {|H| + 2|G|}
< |C| 
< |H|.
$$
Note that the above inequality can be restated as 
\begin{equation}
\label{Eqn:AKLRel}
\frac{|C| }{|H\setminus C|} 
> 2[G:H]
\end{equation}
together with $C\subsetneq H$ (as explained in the proof of Proposition \ref{Prop:Fini}). 

We consider the subsets of $G$ which are contained in the subgroups of $G$ and establish a necessary condition (similar to Equations \eqref{Eqn:BSRel}, \eqref{Eqn:AKLRel}) for them to be  non-minimal complements in $G$. For a subset $C$ of $G$, strictly contained in a subgroup $H$, define the \textit{relative quotient of $C$ with respect to $H$} to be 
$$\lambda_H(C) 
= 
\frac{|C| }{|H\setminus C|} .$$
Note that \cite[Proposition 17]{AlonKravitzLarson} (in the context of finite abelian groups $G$), \cite[Theorem C]{CoMin1} (for any group $G$ with $G = H$) can be restated as follows: a subset $C$ of a group $G$, properly contained in a subgroup $H$ of $G$, is not a minimal complement in $G$ if its relative quotient with respect to $H$ is greater than the double of the index of $H$ in $G$, i.e., 
$$\lambda_H(C) > 2[G:H].$$
The aim of this article is to establish that such a statement holds in more general contexts. 

\subsection{Results obtained}
By suitably adapting the proof of \cite[Theorem C]{CoMin1}, we prove that a subset $C$ of a group $G$, properly contained in a subgroup $H$, is not a minimal complement in $G$ if the inequality 
$$\lambda_H(C) > 2[G:H]$$
holds (when the above inequality is interpreted in an appropriate manner). 
In fact, our results are more general. Under suitable hypothesis, we prove that not only such sets $C$, but also the sets of the form $(C\setminus E) \cup F$ are non-minimal complements for subsets $C$ of $H$ satisfying the above inequality, finite subsets $E\subseteq C$ and subsets $F\subseteq H\setminus C$. 
We refer to Theorems \ref{Thm:FAvoidsOneCoset}, \ref{Thm:QLeavesLAppears}, \ref{Thm:FContainedInSingleCoset}, \ref{Thm:CMinusCSymm}, \ref{Thm:Top}, \ref{Thm:Cardi} and Propositions \ref{Prop:Coset}, \ref{Prop:SansK}, \ref{Prop:Fini} for the precise statements. These results are more general than \cite[Proposition 17]{AlonKravitzLarson}, \cite[Theorem C]{CoMin1}. Using them, we obtain subsets of groups which are not minimal complements to any subset. Though the above-mentioned results apply to any group, to motivate the discussion, we provide the examples in the context of the integers. 

\begin{example}
\label{Eg:Intro}
\quad 
\begin{enumerate}
\item 
It follows from Theorem \ref{Thm:FAvoidsOneCoset} that the set 
$$(\{5, 7, \cdots, 27, 29\} + 32\bbZ)
\cup \{p\,|\, p \equiv \pm 1 \,(\mathrm{mod}\, 32), p \text{ is a prime}\}$$
is not a minimal complement in $\bbZ$. 

\item It follows from Theorem \ref{Thm:QLeavesLAppears} that the set 
$$(\{3, 9, 11, 13, \cdots, 47\} + 48\bbZ)
\cup \{p\,|\, p \equiv 1, 5, 7 \,(\mathrm{mod}\, 48), p \text{ is a prime}\}$$
is not a minimal complement in $\bbZ$. 

\item It follows from Theorem \ref{Thm:FContainedInSingleCoset} that the set 
$$(\{3, 5, 7, 9, 11\} + 12\bbZ)
\cup \{p\,|\, p \equiv 1 \,(\mathrm{mod}\, 12), p \text{ is a prime}\}$$
is not a minimal complement in $\bbZ$. 

\item 
It follows from Theorem \ref{Thm:CMinusCSymm} that the set 
$$(\{0, 1, 2, 3, 6, 7, 8\} + 9\bbZ) 
\cup \{p\,|\, p \equiv \pm 5 \,(\mathrm{mod}\, 9), p \text{ is a prime}\}
$$
is not a minimal complement in $\bbZ$.

\item 
It follows from Proposition \ref{Prop:Coset} that $\{2, 4, 6, 8, 10\} + 12\bbZ$ is not a minimal complement in $\bbZ$. Moreover, it also follows that the set of irrational numbers is not a minimal complement in $\bbR$, and the set of transcendental numbers is not a minimal complement in $\bbC$. 

\item 
It follows from Proposition \ref{Prop:SansK} that for any positive integer $k$ and for any nonempty finite subset $F$ of $k\bbZ$, the set $k\bbZ\setminus F$ is not a minimal complement in $\bbZ$. 

\item 
It follows from Theorem \ref{Thm:Top} that the set of real numbers having absolute value greater than one is not a minimal complement in $\bbR$. 

\item 
It follows from Theorem \ref{Thm:Cardi} that the set of irrational numbers, with a countable number of points removed, is not a minimal complement in $\bbR$, the set of transcendental numbers, with a countable number of points removed, is not a minimal complement in $\bbC$. 
\end{enumerate}
\end{example}

There are several immediate questions about the minimal complements in a finite group, for instance, given a group $G$ of order $n$, what are the sizes of the minimal complements, what are the integers $k$ between $1$ and $n$ such that any subset (or some subset) of $G$ of size $k$ is a minimal complement \cite[Question 1]{CoMin1}. Further, one can study these questions in the context of cyclic groups, or abelian groups, or finite groups. Some of these questions were answered by Alon, Kravitz and Larson in the context of abelian groups \cite[Theorem 1, Proposition 17]{AlonKravitzLarson}. The results obtained in Section \ref{Sec:NonMinComp} apply to groups, which are not assumed to be abelian, and thus they further improve our understanding about \cite[Question 1]{CoMin1}. 

Following \cite[Definition 5]{BurcroffLuntzlara}, one can consider the notion of robust MAC and robust non-MAC in any abelian group $G$. A subset of an abelian group $G$ is said to be a \textit{robust non-MAC} if it remains a non-minimal complement after the removal or the inclusion of finitely many points (see Definition \ref{Defn:Robust}). We obtain uncountably many examples of robust non-MACs in finitely generated abelian groups of positive rank and in any free abelian group of positive rank (see Theorem \ref{Thm:RobustNonMac} for a more general statement). Further, one can consider the analogous notion in non-abelian groups and obtain several examples by applying the results from Section \ref{Sec:NonMinComp}. In particular, we show that for any number field $K$ of degree $\geq 3$, the group $\gln_n(\calO_K)$ contains uncountably many robust non-minimal complements where $\calO_K$ denote the ring of integers of $K$. We refer to Section \ref{Sec:RobustNonMac} for the details. 

\section{Non-minimal complements in groups}
\label{Sec:NonMinComp}

The principal results of this Section are Theorems \ref{Thm:FAvoidsOneCoset}, \ref{Thm:QLeavesLAppears}, \ref{Thm:FContainedInSingleCoset}, \ref{Thm:CMinusCSymm}, \ref{Thm:Top}, \ref{Thm:Cardi}. They are aimed at establishing that a subset $C$ of a group $G$, properly contained in a subgroup $H$, is not a minimal complement in $G$ if the inequality 
$$\lambda_H(C) > 2[G:H]$$
holds (when the above inequality is interpreted in an appropriate manner). 
Moreover, these results not only deal with such sets $C$, but also deal with the sets of the form $(C\setminus E) \cup F$ where $C$ is a subset of $H$ satisfying the above inequality, $E$ is a finite subset of $C$ and $F\subseteq H\setminus C$. We refer to Theorems \ref{Thm:FAvoidsOneCoset}, \ref{Thm:QLeavesLAppears}, \ref{Thm:FContainedInSingleCoset}, \ref{Thm:CMinusCSymm}, \ref{Thm:Top}, \ref{Thm:Cardi} for the precise statements. These results are illustrated by applying them to subsets of certain groups, and thereby obtaining examples of non-minimal complements, see Remarks \ref{Remark:FAvoidsOneCoset}, \ref{Remark:QLeavesLAppears}, \ref{Remark:FContainedInSingleCoset}, \ref{Remark:CMinusCSymm}, \ref{Remark:Top}, \ref{Remark:Cardi}, see also Section \ref{Sec:RobustNonMac}. 
Some of their important consequences are stated in Propositions \ref{Prop:Coset}, \ref{Prop:SansK},  \ref{Prop:Fini}. 

We remark that no group is assumed to be abelian or finite unless otherwise stated. 

In the following, $H$ denotes a finite index subgroup of a group $G$, $K$ denotes a normal subgroup of $H$. If $X$ is a subset of $G$ and $X$ is the union of certain $K$-right cosets, then denote the number of $K$-right cosets contained in $X$ by $[X:K]$. Let $C$ denote a proper subset\footnote{A subset $A$ of a set $B$ is said to be a \textit{proper subset} if $B\setminus A$ is nonempty.} of $H$. Suppose $C$ is a union of certain right cosets of $K$ in $H$ and $H\setminus C$ is the union of finitely many right cosets of $K$ in $H$. Henceforth, we assume that the relative quotient of $C$ with respect to $H$ is greater than the double of the index of $H$ in $G$, i.e., the inequality 
$$\lambda_H(C) > 2[G:H]$$
holds in the following sense. 

\begin{assumption}
\label{Assumption}
The number of the $K$-right cosets contained in $C$ is greater than the product of $2[G:H]$ and the number of $K$-right cosets contained in $H\setminus C$.
\end{assumption}

Let $E$ be a finite subset of $C$ and $F$ be a subset of $H\setminus C$. 

\begin{theorem}
\label{Thm:FAvoidsOneCoset}
If 
\begin{enumerate}
\item the set $F$ does not intersect with some $K$-right coset in $H\setminus C$,
\item the number of elements of $K$ is greater than $2([G:H] + 1) |E|$,
\end{enumerate}
and Assumption \ref{Assumption} holds, then $(C\setminus E) \cup F$ is not a minimal complement in $G$.
\end{theorem}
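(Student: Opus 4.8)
The plan is to adapt the essential-element counting of \cite[Theorem C]{CoMin1} to the relative setting governed by $H$, $K$ and the perturbation $(E,F)$. First I would reduce the statement to a single removal assertion. Since the two sidedness is symmetric, it suffices to treat left complements: assuming $D := (C\setminus E)\cup F$ satisfies $D\cdot B = G$ for some $B$, I must exhibit one $d_0\in D$ with $(D\setminus\{d_0\})\cdot B = G$; the right-complement case is identical after replacing every set by its inverse. Call $d\in D$ \emph{essential} (for $B$) if $(D\setminus\{d\})\cdot B\ne G$. Unwinding the definition, $d$ is essential if and only if some $g\in dB$ satisfies $gB^{-1}\cap D = \{d\}$, i.e.\ $g$ is represented in $D\cdot B$ only through $d$. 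Thus the removable elements of $D$ are exactly the non-essential ones, and it is enough to produce a single non-essential $d_0\in D$.

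The key structural input is that, because $K\trianglelefteq H$ and $C$ is a union of right $K$-cosets in $H$, both $C$ and $H\setminus C$ are bi-$K$-invariant ($kC = Ck = C$ for $k\in K$, and likewise for $H\setminus C$), while $G$ decomposes into $[G:H]$ cosets of $H$. I would use this to confine the essential elements. For essential $d$ with witness $g = d b_d$, the condition $gb'^{-1}\in G\setminus D$ for $b'\ne b_d$ carries information only when $gb'^{-1}$ returns to $H$, which happens precisely when $b'$ lies in the coset $Hb_d$; tracking these contributions across the $[G:H]$ cosets of $H$ and projecting to $H/K$ confines the $K$-cosets that can meet an essential element to (translates of) the small set $\overline{H\setminus C}$. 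This should yield a bound of the form: the number of $K$-cosets containing an essential element is at most $2[G:H]\cdot[H\setminus C : K]$, mirroring Theorem C but weighted by the index $[G:H]$. Since Assumption \ref{Assumption} gives $[C:K] > 2[G:H]\,[H\setminus C:K]$, this bound is beaten, so at least one full $K$-coset $Kc_0\subseteq C$ contains no essential element.

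The passage from $C$ to $D = (C\setminus E)\cup F$ can only create essential elements through the finitely many deletions in $E$ and through the insertions $F$, and this is where the two remaining hypotheses enter. The deletions are absorbed by condition (2): since $|K| > 2([G:H]+1)|E|$, the essential elements newly created by removing $E$ cannot fill a whole $K$-coset, so a coset $Kc_0$ free of essential elements still survives the count; moreover $|K| > |E|$ guarantees a point $d_0\in Kc_0\setminus E\subseteq C\setminus E\subseteq D$, which is then removable. The insertions are neutralised by condition (1): because $F$ misses some $K$-coset $Ka\subseteq H\setminus C$, that coset remains entirely outside $D$ and serves as the ``free'' coset on which the Theorem C counting is carried out, so adjoining $F$ does not resurrect an obstruction at $d_0$. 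Combining these, $d_0$ is non-essential, and $(D\setminus\{d_0\})\cdot B = G$.

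The hard part will be the confinement estimate of the second paragraph in the genuinely non-abelian setting. Because $H$ need not be normal in $G$ and $K$ need not be normal in $G$, one cannot pass to a quotient of $G$, so the representations $g = d_0 b$ must be analysed coset-by-coset across the $[G:H]$ cosets of $H$, with the bi-$K$-invariance of $C$ doing the work that commutativity would otherwise do. Entangled with this is the quantitative control of the essential elements produced by deleting $E$: showing they occupy fewer than $|K|$ points of any single coset is exactly what forces the hypothesis $|K| > 2([G:H]+1)|E|$, and verifying that the inserted set $F$ is harmless once it avoids the coset $Ka$ is the counterpart obstacle resolved by condition (1).
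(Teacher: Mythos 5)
Your skeleton matches the paper's strategy (reduce to left complements, count $K$-cosets, use Assumption \ref{Assumption} to beat a bound of $2[G:H]\,[(H\setminus C):K]$ cosets, absorb $E$ via condition (2)), but the confinement estimate at the heart of your plan is asserted rather than proved, and your sketch of it does not work as stated. For an essential $d$ with witness $g=db_d$, what one actually gets is $d\in\bigl((H\setminus C)\cup E\bigr)\,b'b_d^{-1}$ for \emph{every} $b'\in B\cap Hb_d$ with $b'\neq b_d$; the translating elements $b'b_d^{-1}$ depend on the witness $b_d$, which varies from one essential element to another, and $B\cap Hb_d$ may be infinite, so ``projecting to $H/K$'' does not confine the essential elements to finitely many $K$-cosets. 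What makes the count work in the paper is fixing, for each of the $\ell=[G:H]$ cosets, a pair $s_i, s_i'\in S_i$ together with selection sets $\calC,\calE,\calE',\calE''$ of controlled size, and then showing directly that any $K$-coset $Kh\subseteq C$ disjoint from $\calC\cup\bigl((H\setminus C)\cdot\{s_i's_i^{-1}\}\bigr)$ is removable (up to the finite corrections): the sets $(H\setminus C)s_i$ and $(C\setminus Kh)s_i$ remain covered, while the critical portion $Khs_i$ is re-covered \emph{through $s_i'$}, because $Khs_i\subseteq Cs_i'$ (as $Kh$ avoids $(H\setminus C)s_i's_i^{-1}$) and $Khs_i\cap Khs_i'=\emptyset$, whence $Khs_i\subseteq (C\setminus Kh)\cdot s_i'$.

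That disjointness $Khs_i\cap Khs_i'=\emptyset$ requires $Ks_i\neq Ks_i'$ (together with normality of $K$ in $H$), and this is exactly where hypothesis (1) enters: since $F$ misses a $K$-coset of $H\setminus C$, the set $K\cdot((C\setminus E)\cup F)$ is a \emph{proper} subset of $H$, which forces each fibre $S_i$ to contain two elements lying in distinct $K$-cosets — the paper's Equation \eqref{Eqn:DistinctModK}. Your proposal assigns condition (1) a different, incorrect role: the coset $Ka\subseteq H\setminus C$ missed by $F$ cannot ``serve as the free coset on which the counting is carried out,'' since the element to be removed must lie in $D=(C\setminus E)\cup F$ and $Ka\cap D=\emptyset$; the removed coset lives inside $C$. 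Without the separation $Ks_i\neq Ks_i'$ the mechanism genuinely breaks: if some $S_i$ were contained in a single $K$-coset $Ks_i$, then every representation of a point of $Khs_i$ through $S_i$ uses an element of $Kh$ itself, so deleting from $Kh$ can destroy coverage — which is precisely why the paper needs Theorems \ref{Thm:FContainedInSingleCoset} and \ref{Thm:CMinusCSymm}, with alternative hypotheses, for the situations where this separation is unavailable. So the gap is twofold: the confinement bound is the hard step and is not established, and the one hypothesis that makes it establishable is spent on the wrong purpose.
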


\begin{proof}
On the contrary, let us assume that $(C\setminus E) \cup F$ is a minimal left complement to a subset $S$ of $G$. Let $\ell$ denote the index of $H$ in $G$. Let $s_1, \cdots, s_\ell$ be elements of $S$ such that 
$$H s_i \cap H s_j = \emptyset \quad \text{ for all } i \neq j.$$
For $1\leq i \leq \ell$, let $S_i$ denote the subset of $S$ defined by 
$$S_i : = 
\{s\in S\,|\, Hs = Hs_i\}.$$
By the first condition, it follows that $(C\setminus E) \cup F$ and $K\cdot ((C\setminus E) \cup F)$ are proper subsets of $H$. So, for each $1\leq i \leq \ell$, there exists an element $s_i'$ in $S_i$ such that 
$$
(K\cdot ((C\setminus E) \cup F))s_i' \neq (K\cdot ((C\setminus E) \cup F))s_i
$$
for all $1\leq i \leq \ell$. Since $K$ is normal in $H$, it follows that 
\begin{equation}
\label{Eqn:DistinctModK}
Ks_i \neq Ks_i'
\end{equation}
for any $i$. 

Note that there exists a subset $\calC$ of $C$ consisting of certain $K$-right cosets such that $\calC$ contains at most $\ell [(H \setminus C):K]$ many $K$-right cosets and $(\calC \cup F)\cdot S$ contains $(H\setminus C) \cdot \{s_1, \cdots, s_\ell\}$. Moreover, there exists a subset $\calE$ of $C\setminus E$ containing at most $|E|$ elements such that  $((\calC\setminus E) \cup \calE \cup F)\cdot S$ contains $(H\setminus C) \cdot \{s_1, \cdots, s_\ell\}$. Further, the set 
$$\calC \cup ((H\setminus C) \cdot  \{s_1's_1^\mo, \cdots, s_\ell' s_\ell^\mo\})$$
contains at most $2\ell  [(H \setminus C):K]$ many $K$-right cosets. By Assumption \ref{Assumption}, it follows that the set $C$ contains a $K$-right coset $Kh$ which is disjoint from the set 
$$\calC \cup ((H\setminus C) \cdot \{s_1's_1^\mo, \cdots, s_\ell's_\ell^\mo\}).$$
Note that there exists a subset $\calE'$ of $C\setminus E$ containing at most $\ell|E|$ elements such that $(\calE' \cup F) \cdot S$ contains $E\cdot \{s_1', \cdots, s_\ell'\}$. Further, note that there exists a subset $\calE''$ of $C\setminus E$ containing at most $\ell|E|$ elements such that $(\calE'' \cup F) \cdot S$ contains $E\cdot \{s_1, \cdots, s_\ell\}$.

We claim that 
$$Hs_i\subseteq 
(((C\setminus E) \setminus Kh) \cup \calE \cup \calE' \cup \calE'' \cup F )
\cdot S$$
for any $1\leq i \leq \ell$. Since $(\calC\setminus \calE) \cup F$ is contained in $(C\setminus E)\cup F$, the set $((\calC\setminus E)\cup \calE \cup F) \cdot S$ contains $(H\setminus C)\cdot s_i$ and $Kh$ does not intersect with $\calC\setminus E$, it follows that $(H\setminus C)\cdot s_i$ is contained in 
$$(((C\setminus E) \setminus Kh) \cup \calE \cup F )
\cdot S.$$
Note that $(C\setminus Kh)\cdot s_i$ is contained in 
$$(E\cdot s_i ) \cup 
\left( 
((C\setminus E) \setminus Kh)
\cdot S
\right),$$
which is contained in 
$$((C\setminus E) \setminus Kh) \cup \calE'' \cup F )
\cdot S.$$
Note that $Khs_i$ does not intersect with $(H\setminus C)\cdot \{s_i'\}$. Since $Hs_i = Hs_i'$, it follows that $Khs_i$ is contained in $C\cdot s_i'$. Further, note that $Khs_i$ does not intersect with $Khs_i'$, otherwise, $Khs_i = Khs_i'$. Since $K$ is normal in $H$, it follows that $hKs_i = hKs_i'$, which yields $Ks_i = Ks_i'$, contradicting $Ks_i \neq Ks_i'$. So $Khs_i$ is contained in $(C\setminus Kh)\cdot s_i'$. Since $(\calE' \cup F) \cdot S$ contains $E\cdot \{s_1', \cdots, s_\ell'\}$, it follows that $Khs_i$ is contained in 
$$(((C\setminus E) \setminus Kh) \cup \calE' \cup F )
\cdot S.$$
This proves the claim that 
$$Hs_i\subseteq 
(((C\setminus E) \setminus Kh) \cup \calE \cup \calE' \cup \calE'' \cup F )
\cdot S$$
for all $1\leq i\leq \ell$. So $((C\setminus E) \setminus Kh) \cup \calE \cup \calE' \cup \calE'' \cup F$ is a left complement to $S$. By the second condition, $((C\setminus E) \setminus Kh) \cup \calE \cup \calE' \cup \calE''$ is a proper subset of $C\setminus E$. Hence $(C\setminus E)\cup F$ is not a minimal left complement to $S$. 

If $(C\setminus E) \cup F$ is a minimal right complement to some subset $T$ of $G$, then $(C^\mo \setminus E^\mo) \cup F^\mo$ is a minimal left complement to $T^\mo$, which is impossible. 
\end{proof}

\begin{remark}
\label{Remark:FAvoidsOneCoset}
Taking $G = \bbZ, H = 2\bbZ, K = 32\bbZ$, $C = (\{5, 7, \cdots, 27, 29\} + 32\bbZ) -1$, 
and $F = \{p\,|\, p \equiv \pm 1  \,(\mathrm{mod}\, 32), p \text{ is a prime}\} -1$, it follows from Theorem \ref{Thm:FAvoidsOneCoset} that the set 
$$(\{5, 7, \cdots, 27, 29\} + 32\bbZ)
\cup \{p\,|\, p \equiv \pm 1  \,(\mathrm{mod}\, 32), p \text{ is a prime}\}$$
is not a minimal complement in $\bbZ$.
\end{remark}

In the proof of Theorem \ref{Thm:FAvoidsOneCoset}, Equation \eqref{Eqn:DistinctModK} played a crucial role. This equation was obtained by using the hypothesis that $F$ does not intersect with some $K$-right coset contained in $H\setminus C$. In the following result, we prove that even if $F$ intersects with each $K$-right coset contained in $H\setminus C$, one may obtain a similar result under an alternate hypothesis. 

\begin{theorem}
\label{Thm:QLeavesLAppears}
If 
\begin{enumerate}
\item $F$ is a proper subset of $H\setminus C$ and given $2[G:H]$ many elements $x_1, y_1, \cdots, x_{[G:H]} , y_{[G:H]}$ of $G$ with $x_i \neq y_i$ for any $i$, there exists a finite index subgroup $L$ of $K$ such that 
$Lx_i \neq Ly_i$ for any $i$ and $L$ is normal in $H$. 

\item for any finite index subgroup $L$ of $K$, the number of elements of $L$ is greater than $2([G:H] + 1) |E|$,
\end{enumerate}
and Assumption \ref{Assumption} holds, then $(C\setminus E) \cup F$ is not a minimal complement in $G$.
\end{theorem}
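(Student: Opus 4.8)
The plan is to adapt the proof of Theorem \ref{Thm:FAvoidsOneCoset}, replacing the normal subgroup $K$ by a suitable finite-index subgroup $L$ of $K$ that is normal in $H$, chosen so as to recover the separation property recorded in Equation \eqref{Eqn:DistinctModK}. The observation driving this is that in Theorem \ref{Thm:FAvoidsOneCoset} the hypothesis that $F$ avoids some $K$-right coset was used only to produce, for each $i$, an element $s_i'\in S_i$ with $Ks_i\neq Ks_i'$; here that separation will instead be supplied by the first condition, at the cost of shrinking $K$ to $L$.

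As in Theorem \ref{Thm:FAvoidsOneCoset}, suppose for contradiction that $(C\setminus E)\cup F$ is a minimal left complement to some $S\subseteq G$, set $\ell=[G:H]$, pick $s_1,\dots,s_\ell\in S$ representing the $\ell$ right $H$-cosets (these exist since $HS=G$), and put $S_i=\{s\in S: Hs=Hs_i\}$. First I would note that since $F$ is a proper subset of $H\setminus C$, the set $(C\setminus E)\cup F$ is a proper subset of $H$; choosing $h_0\in(H\setminus C)\setminus F$ and covering $h_0 s_i$ by the complement produces, for each $i$, an element $s_i'\in S_i$ with $((C\setminus E)\cup F)s_i'\neq((C\setminus E)\cup F)s_i$, and in particular $s_i\neq s_i'$. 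Applying the first condition to the $2\ell$ elements $s_1,s_1',\dots,s_\ell,s_\ell'$ then yields a finite-index subgroup $L$ of $K$, normal in $H$, with $Ls_i\neq Ls_i'$ for every $i$; this is the analogue of Equation \eqref{Eqn:DistinctModK}.

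Next I would verify that the whole configuration descends to $L$. Since $L\leq K$, each $K$-right coset is a disjoint union of $[K:L]$ many $L$-right cosets, so $C$ is a union of $L$-right cosets, $H\setminus C$ is a union of finitely many $L$-right cosets, and both coset-counts $[C:L]$, $[(H\setminus C):L]$ are obtained from the corresponding $K$-counts on multiplying by $[K:L]$; hence Assumption \ref{Assumption} holds verbatim with $L$ in place of $K$. Moreover the second condition guarantees $|L|>2([G:H]+1)|E|$ for this particular $L$. At this stage the data $(H,L,C,E,F)$, together with the separation $Ls_i\neq Ls_i'$, constitute exactly the structural input used in the proof of Theorem \ref{Thm:FAvoidsOneCoset}, with $L$ playing the role of $K$.

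It then remains to replay that proof with $L$ in place of $K$: build the auxiliary sets $\calC,\calE,\calE',\calE''$ and, using Assumption \ref{Assumption} for $L$, select an $L$-right coset $Lh\subseteq C$ disjoint from $\calC\cup((H\setminus C)\cdot\{s_1's_1^{\mo},\dots,s_\ell's_\ell^{\mo}\})$; the normality of $L$ in $H$ and the separation $Ls_i\neq Ls_i'$ give $Lhs_i\neq Lhs_i'$, which is the one place where Equation \eqref{Eqn:DistinctModK} entered, and one concludes that $((C\setminus E)\setminus Lh)\cup\calE\cup\calE'\cup\calE''\cup F$ is again a left complement to $S$. The inequality $|L|>2([G:H]+1)|E|$ forces this set to be a proper subset of $(C\setminus E)\cup F$, contradicting minimality; the right-complement case follows by passing to $(C^{\mo}\setminus E^{\mo})\cup F^{\mo}$ exactly as in Theorem \ref{Thm:FAvoidsOneCoset}. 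The main obstacle is precisely the adaptive choice of $L$: unlike in Theorem \ref{Thm:FAvoidsOneCoset}, $L$ cannot be fixed in advance but must be extracted from the pairs $(s_i,s_i')$ coming from the unknown $S$, which is exactly why the separation is phrased as an existence statement in the first condition and why one cannot simply invoke Theorem \ref{Thm:FAvoidsOneCoset} as a black box.
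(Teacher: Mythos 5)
Your proposal is correct and follows essentially the same route as the paper's own proof: obtain $s_i'\neq s_i$ in each $S_i$ from the properness of $(C\setminus E)\cup F$ in $H$, invoke the first condition to produce a finite-index subgroup $L$ of $K$, normal in $H$, with $Ls_i\neq Ls_i'$ (recovering Equation \eqref{Eqn:DistinctModK}), and then rerun the argument of Theorem \ref{Thm:FAvoidsOneCoset} with $L$ in place of $K$. In fact you make explicit a point the paper leaves implicit when it says ``proceeding in a similar fashion''---namely that Assumption \ref{Assumption} and the cardinality hypothesis descend from $K$ to $L$ (coset counts scale by $[K:L]$, and the second condition is quantified over all finite-index subgroups precisely for this reason)---which is a welcome clarification.
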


\begin{proof}
On the contrary, let us assume that $(C\setminus E) \cup F$ is a minimal left complement to a subset $S$ of $G$. Let $\ell$ denote the index of $H$ in $G$. Let $s_1, \cdots, s_\ell$ be elements of $S$ such that 
$$H s_i \cap H s_j = \emptyset \quad \text{ for all } i \neq j.$$
For $1\leq i \leq \ell$, let $S_i$ denote the subset of $S$ defined by 
$$S_i : = 
\{s\in S\,|\, Hs = Hs_i\}.$$
By the first condition, $(C\setminus E) \cup F$ is a proper subset of $H$. It follows that $S_i$ contains an element other than $s_i$. Let $1\leq i\leq \ell$ be an integer and $s_i'\neq s_i$ be an element of $S_i$. By the first condition, there exists a finite index subgroup $L$ of $K$ such that $L$ is normal in $H$ and $Ls_i \neq Ls_i'$ for any $i$. Replacing $K$ by $L$ (if necessary), we may (and do) assume that $Ks_i \neq Ks_i'$ for any $i$. 
Note that the same condition was obtained in Equation \eqref{Eqn:DistinctModK} in the course of the proof of Theorem \ref{Thm:FAvoidsOneCoset}. 
Proceeding in a similar fashion, we obtain the result. 
\end{proof}

\begin{corollary}
Suppose $C$ is a subset of $\bbZ$ and it is the union of translates of a nonzero subgroup $K$ of $\bbZ$. 
If $\lambda_\bbZ(C) > 2$, then 
$(C\setminus E) \cup F$ is not a minimal complement in $\bbZ$ for any finite subset $E$ of $C$ and for any proper subset $F$ of $\bbZ\setminus C$. 
\end{corollary}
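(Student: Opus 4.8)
The plan is to obtain this statement as a direct specialization of Theorem \ref{Thm:QLeavesLAppears} to the case $G = H = \bbZ$, so that $[G:H] = 1$ and $2[G:H] = 2$. Write $K = k\bbZ$ with $k$ a positive integer; this is automatically normal in the abelian group $\bbZ$. Since $C$ is by hypothesis a union of translates of $K$, it is a union of $K$-cosets in $\bbZ$, and its complement $\bbZ\setminus C$ is the union of the remaining $K$-cosets, of which there are at most $k$; in particular $\bbZ\setminus C$ is a union of finitely many $K$-cosets, as required. The inequality $\lambda_\bbZ(C) > 2$ should be read, in the sense explained before Assumption \ref{Assumption}, as the statement that the number of $K$-cosets contained in $C$ exceeds twice the number of $K$-cosets contained in $\bbZ\setminus C$; since $[G:H] = 1$, this is exactly Assumption \ref{Assumption} in the present situation. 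Note also that $\lambda_\bbZ(C) > 2$ forces $\bbZ\setminus C$ to be nonempty, so that $C$ is a proper subset of $\bbZ$.

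It then remains to verify the two hypotheses of Theorem \ref{Thm:QLeavesLAppears}. Condition (2) is automatic: every finite-index subgroup $L$ of $K = k\bbZ$ is again an infinite subgroup of $\bbZ$, so its cardinality is infinite and in particular exceeds $2([G:H]+1)|E| = 4|E|$ for every finite $E$. For condition (1), the requirement that $F$ be a proper subset of $\bbZ\setminus C$ is part of the hypothesis, so the substantive point is the separation requirement. Here $[G:H] = 1$, so one is given a single pair $x_1 \neq y_1$ in $\bbZ$ and must produce a finite-index subgroup $L \le K$, normal in $\bbZ$, with $Lx_1 \neq Ly_1$. I would take $L = mk\bbZ$ for a positive integer $m$ with $mk > |x_1 - y_1|$; then $x_1 - y_1 \notin mk\bbZ$, so $x_1 + mk\bbZ \neq y_1 + mk\bbZ$, i.e. $Lx_1 \neq Ly_1$, while $L$ is of finite index in $K$ and normal in $\bbZ$ by commutativity.

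With Assumption \ref{Assumption} and both conditions of Theorem \ref{Thm:QLeavesLAppears} in force, that theorem yields directly that $(C\setminus E)\cup F$ is not a minimal complement in $\bbZ$ for any finite $E\subseteq C$ and any proper $F\subseteq \bbZ\setminus C$. The only genuine content, beyond bookkeeping, is the coset-counting reinterpretation of the relative quotient and the construction of the separating subgroup $L$ above; since both are elementary in $\bbZ$, I do not expect any serious obstacle. The one subtlety worth stating carefully is the convention under which $\lambda_\bbZ(C) > 2$ is to be understood when $C$ and its complement are both infinite, namely as a ratio of numbers of $K$-cosets.
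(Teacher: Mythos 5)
Your proposal is correct and is exactly the derivation the paper intends: the corollary is stated immediately after Theorem \ref{Thm:QLeavesLAppears} as its specialization to $G = H = \bbZ$, $K = k\bbZ$, and your verification of Assumption \ref{Assumption} (via the coset-count reading of $\lambda_\bbZ(C) > 2$) and of conditions (1) and (2) (taking $L = mk\bbZ$ with $mk > |x_1 - y_1|$, and noting finite-index subgroups of $k\bbZ$ are infinite) is precisely what is needed.
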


\begin{remark}
\label{Remark:QLeavesLAppears}
Taking $G = \bbZ, H = 2\bbZ, K = 48\bbZ$, $C = \{2, 8, 10, 12, \cdots, 46\} + 48\bbZ$ and $F = \{p\,|\, p \equiv 1, 5, 7  \,(\mathrm{mod}\, 48), p \text{ is a prime}\}-1$, it follows from Theorem \ref{Thm:QLeavesLAppears} that the set 
$$(\{3, 9, 11, 13, \cdots, 47\} + 48\bbZ)
\cup \{p\,|\, p \equiv 1, 5, 7 \,(\mathrm{mod}\, 48), p \text{ is a prime}\}$$
is not a minimal complement in $\bbZ$. 
\end{remark}

Note that the proofs of Theorems \ref{Thm:FAvoidsOneCoset}, \ref{Thm:QLeavesLAppears} crucially relied on the observation that $S_i$ contains two elements which lies in two disjoint $K$-right cosets. However, if we consider a set of the form $(C\setminus E)\cup F$ and assume that it is a minimal left complement to some set $S$, it is not clear whether each $S_i$ has this property. We show in the following result that even in such a situation, one may obtain a similar result under an alternate hypothesis.  

\begin{theorem}
\label{Thm:FContainedInSingleCoset}
If 
\begin{enumerate}
\item the set $F$ is either empty or it is contained in a single $K$-right coset, 
\item 
the set $F\cup X$ does not contain any $K$-right coset for any subset $X$ of $H$ of size $\leq 2([G:H] + 1) |E|$,
\item the set $E \cdot F^\mo$ does not contain any $K$-right coset,
\end{enumerate}
and Assumption \ref{Assumption} holds, then $(C\setminus E) \cup F$ is not a minimal complement in $G$.
\end{theorem}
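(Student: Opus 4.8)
The plan is to argue by contradiction along the exact skeleton of the proof of Theorem \ref{Thm:FAvoidsOneCoset}, isolating the one place where that argument breaks down. Suppose $(C\setminus E)\cup F$ were a minimal left complement to some $S\subseteq G$; put $\ell=[G:H]$, choose $s_1,\dots,s_\ell\in S$ lying in the $\ell$ distinct right cosets of $H$ (possible since $H\cdot S=G$), and set $S_i=\{s\in S:Hs=Hs_i\}$. The goal remains to produce a single $K$-right coset $Kh\subseteq C$ whose deletion, after reinserting correction sets of size $O(\ell|E|)$ drawn from $C\setminus E$, still leaves a complement to $S$; the coset count of Assumption \ref{Assumption} then supplies such a $Kh$ and contradicts minimality, while hypothesis (2) is what keeps the correction sets properly inside $C\setminus E$. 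The entire input that Theorem \ref{Thm:FAvoidsOneCoset} enjoyed and that is now missing is Equation \eqref{Eqn:DistinctModK}: for each $i$, an element $s_i'\in S_i$ with $Ks_i\neq Ks_i'$.

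First I would try to manufacture these $s_i'$ and see precisely when this fails. If some fibre $S_i$ is contained in the single coset $Ks_i$, then, since only $S_i$ can send the complement into $Hs_i$, coverage of $Hs_i$ gives $Hs_i\subseteq K((C\setminus E)\cup F)s_i$, i.e. $H=K((C\setminus E)\cup F)$. Unwinding this equality, and using hypothesis (1) that $F$ sits inside one $K$-coset $Kf$, forces a rigid picture: $H\setminus C$ is exactly the single coset $Kf\supseteq F$, and $E$ swallows no $K$-coset of $C$ (so $K(C\setminus E)=C$). Thus I would split into two cases: either every $S_i$ meets two distinct $K$-cosets, in which case \eqref{Eqn:DistinctModK} holds and the proof of Theorem \ref{Thm:FAvoidsOneCoset} goes through verbatim; or we are in the rigid configuration above.

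The rigid single-coset case is the main obstacle, and it is exactly what hypotheses (2) and (3) are tailored to defeat. In that case each fibre $Hs_i$ decomposes modulo $K$ into its $C$-part and its single $H\setminus C$-part $Kf\,s_i$, and the crucial constraints forced by coverage read $(Kh\setminus E)\cdot S_i=Kh\,s_i$ for every $C$-coset $Kh$ and $F\cdot S_i=Kf\,s_i$, while no point $e\,s_i$ with $e\in E$ can be recovered through $F$ (since $e\notin Kf$). Here I would aim to refill a deleted coset $Kh$ using only finitely many retained points of $Kh$: a single retained point of $Kh$ already covers a full $K$-coset worth of $Kh\,s_i$ once $S_i$ is not too thin, and conditions (2) and (3) are precisely what prevent the fibres $S_i$ from being thin and prevent the $E$-holes in distinct fibres from conspiring. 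Concretely, I expect a failure either to refill a deleted coset or to cover some point to place a full $K$-coset inside $F\cup X$ for some $X\subseteq H$ with $|X|\le 2([G:H]+1)|E|$, or inside $E\cdot F^{\mo}$, each of which is forbidden by (2) and (3) respectively. Turning this heuristic into an exact count, so that every obstruction is identified with one of these two forbidden coset-inclusions, is the step I expect to demand the most care.

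Finally, the right-complement case reduces to the one just treated: were $(C\setminus E)\cup F$ a minimal right complement to some $T$, then $(C^{\mo}\setminus E^{\mo})\cup F^{\mo}$ would be a minimal left complement to $T^{\mo}$, and since $H$ and $K$ are subgroups the inverted data again satisfies Assumption \ref{Assumption} and hypotheses (1)--(3), so the left-complement conclusion applies and yields the desired contradiction.
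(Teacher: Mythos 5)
Your reduction to the degenerate fibres is correct as far as it goes: if every $S_i$ meets two distinct $K$-right cosets, then one has Equation \eqref{Eqn:DistinctModK} and the argument of Theorem \ref{Thm:FAvoidsOneCoset} does go through (hypothesis (2) here implies $|K| > 2([G:H]+1)|E|$, so properness of the shrunken complement survives), and your derivation that a single fibre with $S_i\subseteq Ks_i$ forces $H\setminus C$ to be the one coset $K\alpha\supseteq F$ is also sound. The genuine gap is in what you do with such fibres. You propose to ``refill'' the deleted coset $Kh$ using finitely many retained points of $Kh$, asserting that hypotheses (2) and (3) ``prevent the fibres $S_i$ from being thin.'' They cannot: (2) and (3) constrain only $C$, $E$, $F$ and say nothing about $S$, which is handed to you by the contradiction hypothesis. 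Worse, for a degenerate fibre the refill \emph{must} come from inside $Kh$: by normality, $(C\setminus Kh)\cdot S_i\subseteq (C\setminus Kh)\,s_i$ and $F\cdot S_i\subseteq K\alpha s_i$, both disjoint from $Khs_i$, so nothing outside $Kh$ can ever cover $Khs_i$. And a retained point $x\in Kh$ contributes only $x\cdot S_i$, which equals the whole of $Khs_i$ precisely when $S_i$ is the entire coset $Ks_i$ --- nothing in the hypotheses forces $S_i$ to be anywhere near that large. The ``exact count'' you flag as the hard step is exactly where the argument cannot be completed by counting.

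The paper closes this case with a translation trick that your proposal never finds. It splits the indices into $P=\{i \,|\, Cs'\neq Cs_i \text{ for some } s'\in S_i\}$ and its complement $Q$ (a coarser dichotomy than yours, but serving the same purpose), treats $i\in P$ as in Theorem \ref{Thm:FAvoidsOneCoset}, and for $i\in Q$ observes that coverage forces $F\cdot S_i = (H\setminus C)\cdot s_i\supseteq K\alpha s_i$; hence, by normality of $K$ in $H$, the translated copy $h\alpha^\mo F\subseteq Kh$ of $F$ satisfies $(h\alpha^\mo F)\cdot S_i\supseteq h\alpha^\mo K\alpha s_i = Khs_i$. So one deletes $Kh$ but re-inserts the (typically infinite) set $h\alpha^\mo F$ along with the finite sets $\calE,\calE',\calE''$. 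This is where the two remaining hypotheses actually act: (3) guarantees that $h\in\calR$ can be chosen with $h\notin(E\cdot F^\mo)\alpha$, so that $h\alpha^\mo F$ avoids $E$ and the modified set really is a subset of $(C\setminus E)\cup F$; and (2) guarantees that $Kh$ is not exhausted by $E\cup\calE\cup\calE'\cup\calE''\cup h\alpha^\mo F$ (otherwise $K\alpha\subseteq F\cup X$ with $|X|\leq 2([G:H]+1)|E|$), so the modified set is a \emph{proper} subset. Your reading of (2) and (3) as forbidding certain coset inclusions is the right instinct, but without the re-inserted translate of $F$ there is no candidate complement to which those conditions can be applied, so the degenerate case --- the entire point of this theorem as opposed to Theorem \ref{Thm:FAvoidsOneCoset} --- remains unproved.
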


\begin{proof}
On the contrary, let us assume that $(C\setminus E) \cup F$ is a minimal left complement to a subset $S$ of $G$. Let $\ell$ denote the index of $H$ in $G$. Let $s_1, \cdots, s_\ell$ be elements of $S$ such that 
$$H s_i \cap H s_j = \emptyset \quad \text{ for all } i \neq j.$$
For $1\leq i \leq \ell$, let $S_i$ denote the subset of $S$ defined by 
$$S_i : = 
\{s\in S\,|\, Hs = Hs_i\}.$$
By the first and second condition, $(C\setminus E) \cup F$ is a proper subset of $H$. It follows that $S_i$ contains an element other than $s_i$. Let $P, Q$ denote the sets defined by 
\begin{align*}
P & 
= 
\{i \,|\, 1\leq i \leq \ell, Cs' \neq Cs_i \text{ for some } s'\in S_i\},\\
Q 
& = 
\{i \,|\, 1\leq i \leq \ell, i\notin P\}.
\end{align*}
For $i\in P$,  let $s_i'$ denote an element of $S_i$ such that $Cs_i' \neq Cs_i$. For $i\in Q$, let $s_i'$ denote an element of $S_i$ other than $s_i$. 

Note that there exists a subset $\calC$ of $C$ consisting of certain $K$-right cosets such that $\calC$ contains at most $\ell [(H \setminus C):K]$ many $K$-right cosets and $(\calC \cup F)\cdot S$ contains $(H\setminus C) \cdot \{s_1, \cdots, s_\ell\}$. There exists a subset $\calE$ of $C\setminus E$ containing at most $|E|$ elements such that $((\calC\setminus E) \cup \calE \cup F)\cdot S$ contains $(H\setminus C) \cdot \{s_1, \cdots, s_\ell\}$. Further, the set 
$$\calC \cup ((H\setminus C) \cdot  \{s_1's_1^\mo, \cdots, s_\ell' s_\ell^\mo\})$$
contains at most $2\ell  [(H\setminus C):K]$ many $K$-right cosets. 
By Assumption \ref{Assumption}, it follows that the set $C$ contains a $K$-right coset $\calR$ which is disjoint from the set 
$$\calC \cup ((H\setminus C) \cdot \{s_1's_1^\mo, \cdots, s_\ell's_\ell^\mo\}).$$
Note that there exists a subset $\calE'$ of $C\setminus E$ containing at most $\ell|E|$ elements such that $(\calE' \cup F) \cdot S$ contains $E\cdot \{s_1', \cdots, s_\ell'\}$. Further, note that there exists a subset $\calE''$ of $C\setminus E$ containing at most $\ell|E|$ elements such that $(\calE'' \cup F) \cdot S$ contains $E\cdot \{s_1, \cdots, s_\ell\}$.

Assume that $F$ is contained in $K\alpha$ for some $\alpha\in H$. Let $h$ be an element of $\calR$ such that $h\notin (E\cdot F^\mo ) \alpha$ (if there were no such $h$, then $(E\cdot F^\mo ) \alpha$ would contain $\calR$, which would imply that $E\cdot F^\mo$ contains a $K$-right coset, contradicting the third condition.). We claim that 
$$Hs_i\subseteq 
(((C\setminus E) \setminus Kh) \cup \calE \cup \calE' \cup \calE'' \cup (h\alpha^\mo F)\cup F )
\cdot S$$
for any $1\leq i \leq \ell$. Since $(\calC\setminus \calE) \cup F$ is contained in $(C\setminus E)\cup F$, the set $((\calC\setminus E)\cup \calE \cup F) \cdot S$ contains $(H\setminus C)\cdot s_i$ and $Kh$ does not intersect with $\calC\setminus E$, it follows that $(H\setminus C)\cdot s_i$ is contained in 
$$(((C\setminus E) \setminus Kh) \cup \calE \cup F )
\cdot S.$$
Note that $(C\setminus Kh)\cdot s_i$ is contained in 
$$(E\cdot s_i ) \cup 
\left( 
(((C\setminus E) \setminus Kh) \cup F )
\cdot S
\right),$$
which is contained in 
$$((C\setminus E) \setminus Kh) \cup \calE'' \cup F )
\cdot S.$$

Let $i$ be an element of $P$. Note that $Khs_i$ does not intersect with $(H\setminus C)\cdot \{s_i'\}$. Since $Hs_i = Hs_i'$, it follows that $Khs_i$ is contained in $C\cdot s_i'$. Further, note that $Khs_i$ does not intersect with $Khs_i'$, otherwise, $Khs_i = Khs_i'$. Since $K$ is normal in $H$, it follows that $hKs_i = hKs_i'$, which yields $Ks_i = Ks_i'$, and consequently 
$K\widetilde h s_i' = K\widetilde h s_i$
holds for any $\widetilde h\in H$, contradicting $i\in P$. So $Khs_i$ is contained in $(C\setminus Kh)\cdot s_i'$. Since $(\calE' \cup F) \cdot S$ contains $E\cdot \{s_1', \cdots, s_\ell'\}$, it follows that $Khs_i$ is contained in 
$$((C\setminus E) \setminus Kh) \cup \calE' \cup F )
\cdot S$$
for $i\in P$. 

For each $i\in Q$, we have  
$$F\cdot S_i = (H \setminus C) \cdot s_i.$$
Note that for any $\beta\in H$, we obtain 
\begin{align*}
\beta \alpha^\mo F 
& \subseteq 
\beta \alpha^\mo K \alpha \\
& = 
K \beta \alpha^\mo \alpha \\
& = 
K \beta
\end{align*}
and 
\begin{align*}
(\beta \alpha^\mo F )\cdot S_i 
& = 
(\beta \alpha^\mo) \cdot (F \cdot S_i) \\
& \supseteq 
(\beta \alpha^\mo) \cdot K \alpha s_i \\
& = 
K \beta \alpha^\mo  \alpha s_i\\
& = 
K \beta s_i
\end{align*}
for $i\in Q$. It follows that $Khs_i$ is contained in $(h\alpha^\mo F)\cdot S$ for $i\in Q$. 

This proves the claim that 
$$Hs_i\subseteq 
((C\setminus E) \setminus Kh) \cup \calE \cup \calE' \cup \calE'' \cup (h\alpha^\mo F) \cup F )
\cdot S$$
for all $i$. So $(C\setminus E) \setminus Kh) \cup \calE \cup \calE' \cup \calE'' \cup (h\alpha^\mo F) \cup F$ is a left complement to $S$. Since $h\notin (E\cdot F^\mo ) \alpha$, using the second condition, $((C\setminus E) \setminus Kh) \cup \calE \cup \calE' \cup \calE'' \cup (h\alpha^\mo F)$ is a proper subset of $C\setminus E$. Hence $(C\setminus E)\cup F$ is not a minimal left complement to $S$. 

If $(C\setminus E) \cup F$ is a minimal right complement to some subset $T$ of $G$, then $(C^\mo \setminus E^\mo) \cup F^\mo$ is a minimal left complement to $T^\mo$, which is impossible. 
\end{proof}

\begin{remark}
\label{Remark:FContainedInSingleCoset}
Taking $G = \bbZ, H = 2\bbZ, K = 12\bbZ$, $C = \{2, 4, 6, 8, 10\} + 12\bbZ$ and $F = \{p\,|\, p \equiv 1  \,(\mathrm{mod}\, 12), p \text{ is a prime}\}-1$, it follows from Theorem \ref{Thm:FContainedInSingleCoset} that the set 
$$(\{3, 5, 7, 9, 11\} + 12\bbZ)
\cup \{p\,|\, p \equiv 1  \,(\mathrm{mod}\, 12), p \text{ is a prime}\}$$
is not a minimal complement in $\bbZ$. 
\end{remark}

Note that in the proof of Theorem \ref{Thm:FContainedInSingleCoset}, the hypothesis that $F$ is either empty or is contained in a single $K$-right coset, played a crucial role. It would be interesting to consider the subsets of $H$ of the form $(C\setminus E) \cup F$ for ``large'' $C$ and for any proper subset $F$ of $H\setminus C$. In Theorem \ref{Thm:CMinusCSymm}, we prove that even if $F$ intersects with each $K$-right coset contained in $H\setminus C$, one may obtain a similar result under an alternate hypothesis.

\begin{proposition}
\label{Prop:CMinusCSymmPrimeIndex}
Let $X, Y$ be two nonempty disjoint subsets of a group $G$ with $X\cup Y = G$. Let $L$ be a subgroup of $G$ such that $X$ is the union of certain right cosets of $L$. Then the inclusion 
\begin{equation}
\label{Eqn:CondXYL}
(X\cdot Y^\mo) \cup (Y \cdot X^\mo) 
\subseteq G\setminus L
\end{equation}
holds. 
Moreover, the following conditions are equivalent. 
\begin{enumerate}
\item The inclusion in Equation \eqref{Eqn:CondXYL}
is a proper inclusion.
\item 
For each $y\in Y$, there exists an element $y'\in Y\setminus (Ly)$ such that
$$y'y^\mo \cdot Y = Y.$$

\item 
For some $y\in Y$, there exists an element $y'\in Y\setminus (Ly)$ such that
$$y'y^\mo \cdot Y = Y.$$
\item 
The set $Y$ is the union of certain right cosets of some subgroup of $G$ which properly contains $L$. 
\item 
The set $X$ is the union of certain right cosets of some subgroup of $G$ which properly contains $L$. 
\end{enumerate}
The set
$$
G\setminus 
\left((X\cdot Y^\mo) \cup (Y \cdot X^\mo) \right)$$
is a subgroup of $G$ and it is the maximal subgroup of $G$ such that $Y$ is a union of its right cosets. 
If $Y$ is finite, then the inclusion
\begin{equation}
\label{Eqn:CondXY}
(X\cdot Y^\mo) \cup (Y \cdot X^\mo) 
\subseteq G\setminus \{e\}
\end{equation}
is an equality under any one of the following conditions. 
\begin{enumerate}[(a)]
\item The order of $y'y^\mo$ is greater than the size of $Y$ for any $y, y'\in Y$ with $y \neq y'$.
\item 
The size of $Y$ is not divisible by the size of any nontrivial finite subgroup of $G$.
\end{enumerate}
\end{proposition}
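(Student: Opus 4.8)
Throughout, write $D := (X\cdot Y^\mo)\cup(Y\cdot X^\mo)$, so that every clause of the statement concerns the set $G\setminus D$. Two facts are immediate: $D$ is symmetric, $D^\mo = D$, and $e\notin D$ (an equation $e = xy^\mo$ or $e = yx^\mo$ would place a common element in $X$ and $Y$). The plan is to reduce the whole proposition to a single computation, namely the identification
$$M := G\setminus D = \{g\in G\,|\, gX = X\} = \{g\in G\,|\, gY = Y\}.$$
Indeed, $g\in X\cdot Y^\mo$ iff $gy\in X$ for some $y\in Y$, i.e. iff $gY\not\subseteq Y$; hence $g\notin X\cdot Y^\mo$ iff $gY\subseteq Y$, and symmetrically $g\notin Y\cdot X^\mo$ iff $gX\subseteq X$. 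Thus $g\in M$ iff $gX\subseteq X$ and $gY\subseteq Y$. Since $g\cdot{-}$ permutes $G$ and $X, Y$ partition $G$, these containments force equalities: from $gX\sqcup gY = G$ one gets $gX = G\setminus gY\supseteq G\setminus Y = X$, so $gX = X$ (whence $gY = Y$); the converse is clear. Hence $M$ is the left stabiliser of $X$ in $G$, which is visibly a subgroup, and this already proves that $G\setminus D$ is a subgroup.

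Next I would record the two structural facts. First, \textquotedblleft$X$ is a union of right cosets of $L$\textquotedblright\ means exactly $LX = X$, i.e. $L\subseteq M$; therefore $D = G\setminus M\subseteq G\setminus L$, which is the inclusion in Equation \eqref{Eqn:CondXYL}. Second, for any subgroup $N$ of $G$, the set $Y$ is a union of right cosets of $N$ iff $NY = Y$ iff $N\subseteq M$; since $MY = Y$, this identifies $M$ as the maximal subgroup of $G$ for which $Y$ is a union of right cosets, establishing the penultimate assertion.

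For the five equivalences I would show each is equivalent to the single condition $M\supsetneq L$ (recall $L\subseteq M$ always). Clause (1) says precisely that $M = G\setminus D$ strictly contains $L$. For (4) and (5): a subgroup $N$ with $L\subsetneq N$ and $NY = Y$ (resp. $NX = X$) exists iff $M\supsetneq L$, taking $N = M$ in the nontrivial direction. For (2) and (3), the key point is that, for a \emph{fixed} $y\in Y$, the existence of $y'\in Y\setminus(Ly)$ with $y'y^\mo\cdot Y = Y$ is equivalent to $M\setminus L\neq\emptyset$, \emph{independently of $y$}: given such a $y'$ the element $g := y'y^\mo$ lies in $M$ (as $gY=Y$) and outside $L$ (as $y'\notin Ly$); conversely any $g\in M\setminus L$ yields the witness $y' := gy\in gY = Y$, which satisfies $y'\notin Ly$ since $y'y^\mo = g\notin L$. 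Consequently both the universally quantified (2) and the existentially quantified (3) collapse to $M\supsetneq L$, and all equivalences follow.

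Finally, suppose $Y$ is finite. The group $M$ acts on $Y$ by left translation (as $MY = Y$), and the orbit of $y\in Y$ is the right coset $My$ of cardinality $|M|$; since $My\subseteq Y$ is finite, $M$ itself is finite, and the partition of $Y$ into such orbits gives $|M|\mid |Y|$. As $e\notin D$ we always have $D\subseteq G\setminus\{e\}$, with equality iff $M = \{e\}$. Under hypothesis (b), a nontrivial $M$ would be a nontrivial finite subgroup whose order divides $|Y|$, contradicting (b). Under hypothesis (a), any $g\in M$ with $g\neq e$ produces the pair $y, y':=gy$ of distinct elements of $Y$ with $y'y^\mo = g$ of order at most $|M|\leq |Y|$, contradicting (a). Either way $M = \{e\}$, giving Equation \eqref{Eqn:CondXY} as an equality. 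The steps deserving care are the passage from the containments $gX\subseteq X$, $gY\subseteq Y$ to equalities (which is what makes $M$ a genuine stabiliser) and, in the finite case, the need to deduce that $M$ is finite before invoking Lagrange; the conceptually least obvious point is the $y$-independence in clauses (2)--(3), which is exactly what makes the \textquotedblleft for each\textquotedblright\ and \textquotedblleft for some\textquotedblright\ formulations coincide.
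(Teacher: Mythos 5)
Your proof is correct, and it takes a genuinely different route from the paper's, although both rest on the same elementary computation. You make the set $M := G\setminus\bigl((X\cdot Y^{-1})\cup(Y\cdot X^{-1})\bigr)$ the centerpiece: $g\in M$ iff $gX\subseteq X$ and $gY\subseteq Y$, and the partition $gX\sqcup gY=G$ promotes these containments to equalities, so $M$ is the left stabilizer of $X$ (equivalently of $Y$), hence visibly a subgroup. Every clause then reduces to a statement about $M$: the inclusion \eqref{Eqn:CondXYL} is just $L\subseteq M$; conditions (1)--(5) each collapse to $M\supsetneq L$; maximality is immediate from the equivalence $NY=Y\Leftrightarrow N\subseteq M$ for subgroups $N$; and when $Y$ is finite, the decomposition of $Y$ into orbits $My$ of size $|M|$ gives $|M|$ dividing $|Y|$, from which cases (a) and (b) follow uniformly. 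The paper proves the same biconditional in disguise --- its key observation, obtained via translation invariance of $(X\cdot Y^{-1})\cup(Y\cdot X^{-1})$, is that $y'y^{-1}$ avoids this set iff $y'y^{-1}\cdot Y=Y$ --- but organizes everything differently: the five equivalences are established by a cycle $(1)\Rightarrow(2)\Rightarrow(3)\Rightarrow(4)\Rightarrow(1)$ with $(4)\Leftrightarrow(5)$ by complementation; the subgroup property and maximality of the complement come from a separate construction, namely the subgroup generated by all subgroups $L'\supseteq L$ whose right cosets tile $Y$; and case (a) is settled by a direct power argument (an element $y_1y_2^{-1}$ of order $r>|Y|$ stabilizing $Y$ would force the $r$ distinct elements $yy_0, y^2y_0,\dots,y^ry_0$ into $Y$). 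Your organization buys economy: the subgroup and maximality assertions are transparent rather than requiring the generated-subgroup argument, and (a), (b) are handled by one Lagrange-type statement. The paper's version of (a) is in turn self-contained, needing neither the orbit decomposition nor the prior identification of the stabilizer. You also correctly flag the two delicate points: the promotion of containments to equalities (without which $M$ is only a submonoid), and the need to know $M$ is finite (via $|My|=|M|$ and $My\subseteq Y$) before dividing $|Y|$ by $|M|$.
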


\begin{proof}
Since $X, Y$ are disjoint and each of them can be expressed as the union of certain $L$-right cosets, it follows that the inclusion in Equation \eqref{Eqn:CondXYL} holds. 

Note that 
$$
((X\cdot g)\cdot (Y\cdot g)^\mo) \cup ((Y\cdot g) \cdot (X\cdot g)^\mo) 
 = 
(X\cdot Y^\mo) \cup (Y \cdot X^\mo) 
$$
for any $g\in G$. Thus the inclusion
$$
(X\cdot Y^\mo) \cup (Y \cdot X^\mo) 
\subseteq G\setminus L
$$
is an equality if and only if the inclusion 
$$
((X\cdot g)\cdot (Y\cdot g)^\mo) \cup ((Y\cdot g) \cdot (X\cdot g)^\mo) 
\subseteq G\setminus L
$$
is an equality. 

Note that for $y, y'\in Y$, the element $y'y^\mo $ does not belong to the set $((X\cdot y^\mo)\cdot (Y\cdot y^\mo)^\mo) \cup ((Y\cdot y^\mo) \cdot (X\cdot y^\mo)^\mo) $ if and only if 
$$y'y^\mo \cdot (Y\cdot y^\mo) 
\subseteq 
Y\cdot y^\mo, 
\quad \text{ and } \quad 
y'y^\mo \cdot (X\cdot y^\mo) 
\subseteq 
X \cdot y^\mo,
$$
which holds if and only if 
$$y'y^\mo \cdot Y = Y.$$

Assume that the first condition holds. Choose an element $y\in Y$. Note that the set $((X\cdot y^\mo)\cdot (Y\cdot y^\mo)^\mo) \cup ((Y\cdot y^\mo) \cdot (X\cdot y^\mo)^\mo) $ contains $X\cdot y^\mo$. So this set does not contain $y'y^\mo $ for some $y'\in Y\setminus (Ly)$. We obtain 
$$y'y^\mo \cdot Y = Y.$$
So the first condition implies the second condition. Note that the second condition implies the third condition. Now, assume that the third condition holds, i.e.,
$$y'y^\mo \cdot Y = Y$$
holds with $y, y'\in Y$, $Ly \neq Ly'$. Let $L'$ denote the subgroup of $G$ generated by $L$ and $y'y^\mo$. Since $x\cdot Y = Y$ for any $x\in L$, and $y'y^\mo \cdot Y = Y$, it follows that $x\cdot Y = Y$ for any $x\in L'$. So, $Y$ is union of certain right cosets of $L'$. Since $Ly \neq Ly'$, it follows that $L$ is properly contained in $L'$. Thus the fourth condition follows. Assume that the fourth condition holds, i.e., $Y$ is the union of certain translates of some subgroup $\calL$ of $G$ which properly contains $L$. Then the set $
(X\cdot Y^\mo) \cup (Y \cdot X^\mo) $ does not contain $\calL$. Since $\calL$ properly contains $L$, the first condition follows. 
Since $X, Y$ are disjoint and $X \cup Y = G$, the fourth and the fifth conditions are equivalent. 
This proves the equivalence of the five conditions.

Consider the subgroups $L'$ of $G$ such that $L'$ contains $L$ and $Y$ can be expressed as the union of right cosets of $L'$. Let $\scrL$ denote the subgroup of $G$ generated by such subgroups. Note that $Y$ can be expressed as the union of the right cosets of $\scrL$. It follows that 
$$(X\cdot Y^\mo) \cup (Y \cdot X^\mo) 
\subseteq G\setminus \scrL.$$
By the construction of $\scrL$, it follows that the above inclusion is an equality, and it also follows that $\scrL$ is the maximal subgroup of $G$ such that $Y$ is a union of its right cosets.

Suppose $Y$ is finite and the order of $y'y^\mo$ is greater than the size of $Y$ for any $y, y'\in Y$ with $y \neq y'$. 
Assume that the inclusion in Equation \eqref{Eqn:CondXY} is not an equality. So, there exist two distinct elements $y_1, y_2 \in Y$ such that 
$$y_1y_2^\mo \cdot Y = Y.$$
Let $y_0$ be an element of $Y$. Denote the order of $y_1 y_2^\mo$ by $r$. Then the set $Y$ contains the $r$ distinct elements $yy_0, y^2y_0, y^3y_0, \cdots, y^ry_0$ where $y = y_1 y_2^\mo$, which is impossible, since $r$ is greater than the size of $Y$. Hence, the inclusion in Equation \eqref{Eqn:CondXY} is an equality. Moreover, if $Y$ is finite and the size of $Y$ is not divisible by the size of any nontrivial finite subgroup of $G$, then $Y$ cannot be expressed as the union of certain right cosets of some nontrivial subgroup of $G$. Hence, the inclusion in Equation \eqref{Eqn:CondXY} is an equality. 
\end{proof}

\begin{theorem}
\label{Thm:CMinusCSymm}
If 
\begin{enumerate}
\item 
\begin{equation}
\label{Eqn:Cond}
(C^\mo(H\setminus C))
\cup 
((H\setminus C)^\mo C)
= H \setminus K
\end{equation}
\item 
the set $F\cup X$ does not contain a $K$-right coset for any subset $X$ of $H$ of size $\leq 2([G:H] + 1) |E|$,
\item the set $E \cdot F^\mo$ does not contain any $K$-right coset,
\end{enumerate}
and Assumption \ref{Assumption} holds, then $(C\setminus E) \cup F$ is not a minimal complement in $G$.
In particular, $(C\setminus E) \cup F$ is not a minimal complement in $G$ if 
\begin{enumerate}
\item 
any subgroup $K'$ of $H$ such that $K$ is contained in $K'$ and $H\setminus C$ can be expressed as a union of right cosets of $K'$, is normal in $H$, 

\item 
the set $F\cup X$ does not contain a $K$-right coset for any subset $X$ of $H$ of size $\leq 2([G:H] + 1) |E|$,
\item the set $E \cdot F^\mo$ does not contain any $K$-right coset,
\end{enumerate}
and Assumption \ref{Assumption} holds.
\end{theorem}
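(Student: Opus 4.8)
The plan is to run the same contradiction scheme as in the proof of Theorem \ref{Thm:FContainedInSingleCoset}, using the hypothesis \eqref{Eqn:Cond} only to control the combinatorics of the sets $S_i$. As before, I would first reduce the right-complement case to the left-complement case by passing to inverses, so assume $(C\setminus E)\cup F$ is a minimal left complement to some $S\subseteq G$. Writing $\ell=[G:H]$, choose $s_1,\dots,s_\ell\in S$ lying in distinct right cosets of $H$ and set $S_i=\{s\in S\mid Hs=Hs_i\}$; the first two hypotheses force $(C\setminus E)\cup F$ to be a proper subset of $H$, so each $S_i$ contains an element other than $s_i$. I would then split the indices into $P=\{i\mid Cs'\neq Cs_i\text{ for some }s'\in S_i\}$ and $Q=\{i\mid i\notin P\}$ exactly as in Theorem \ref{Thm:FContainedInSingleCoset}. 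The one genuinely new ingredient enters here: applying Proposition \ref{Prop:CMinusCSymmPrimeIndex} to the pair $C,H\setminus C$ inside $H$, the hypothesis \eqref{Eqn:Cond} says precisely that the right stabilizer $\{h\in H\mid Ch=C\}$ equals $K$. Consequently $Cs'=Cs_i$ is equivalent to $Ks'=Ks_i$, so $i\in P$ exactly when some $s_i'\in S_i$ satisfies $Ks_i'\neq Ks_i$, while $i\in Q$ exactly when $S_i\subseteq Ks_i$.

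The auxiliary sets $\calC,\calE,\calE',\calE''$ and the choice, via Assumption \ref{Assumption}, of a coset $\calR=Kh\subseteq C$ disjoint from $\calC\cup((H\setminus C)\cdot\{s_1's_1^\mo,\dots,s_\ell's_\ell^\mo\})$ are produced verbatim as in Theorem \ref{Thm:FContainedInSingleCoset}. For $i\in P$ I would cover $Hs_i$ after deleting $Kh$ exactly as in the proof of Theorem \ref{Thm:FAvoidsOneCoset}: the distinctness $Ks_i\neq Ks_i'$ (the analogue of \eqref{Eqn:DistinctModK}) guarantees that $Khs_i$ lands in $(C\setminus Kh)\cdot s_i'$, so the deleted coset is recovered.

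The crux, and the step I expect to be the main obstacle, is the case $i\in Q$, where $F$ is now allowed to meet every $K$-coset of $H\setminus C$ and so the single-coset trick of Theorem \ref{Thm:FContainedInSingleCoset} is unavailable. Here $S_i\subseteq Ks_i$, and comparing $\big((C\setminus E)\cup F\big)\cdot S_i$ with $Hs_i$ forces $F\cdot S_i=(H\setminus C)\cdot s_i$. The observation that rescues the argument is that this equality refines coset by coset: for each $\gamma$ with $K\gamma\subseteq H\setminus C$, normality of $K$ gives $(F\cap K\gamma)\cdot S_i\subseteq K\gamma s_i$, and since these pieces are disjoint with union $(H\setminus C)s_i$ one gets $(F\cap K\gamma)\cdot S_i=K\gamma s_i$ for every such $\gamma$ and every $i\in Q$. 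Fixing one coset $K\gamma_0\subseteq H\setminus C$ and setting $W=h\gamma_0^\mo(F\cap K\gamma_0)$, normality yields $W\subseteq Kh$ and $W\cdot S_i=h\gamma_0^\mo K\gamma_0 s_i=Khs_i$ simultaneously for all $i\in Q$; thus adjoining the single set $W$ recovers every deleted coset $Khs_i$ with $i\in Q$. That $\big((C\setminus E)\setminus Kh\big)\cup\calE\cup\calE'\cup\calE''\cup W$ is still a \emph{proper} subset of $C\setminus E$ would follow from the second hypothesis applied to the set consisting of $E$ together with the parts of $\calE,\calE',\calE''$ lying in $Kh$ (whose total size is at most $2([G:H]+1)|E|$), after translating back by $h\gamma_0^\mo$; the third hypothesis is what makes a legitimate choice of the representative $h$ possible, exactly as in Theorem \ref{Thm:FContainedInSingleCoset}.

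Finally, for the ``in particular'' statement I would not argue afresh but reduce it to the main statement by enlarging $K$. Proposition \ref{Prop:CMinusCSymmPrimeIndex} identifies the maximal subgroup $M\supseteq K$ of $H$ controlling the symmetric difference set appearing in \eqref{Eqn:Cond}; hypothesis (1) says that every such intermediate subgroup, in particular $M$, is normal in $H$. Replacing $K$ by $M$ leaves hypotheses (2) and (3) intact (a $K$-coset lies inside any $M$-coset, so ``contains no $K$-coset'' is only weakened) and leaves Assumption \ref{Assumption} intact (both coset-counts are divided by the common factor $[M:K]$), while \eqref{Eqn:Cond} now holds for $M$ by the defining property of $M$. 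The main statement then applies with $K$ replaced by the normal subgroup $M$. The delicate point in this last reduction is that one must invoke normality of $M$ to pass between its left- and right-coset descriptions, so that the maximal subgroup furnished by Proposition \ref{Prop:CMinusCSymmPrimeIndex} is exactly the one making \eqref{Eqn:Cond} an equality.
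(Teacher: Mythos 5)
Your proposal is correct and follows essentially the same route as the paper's own proof: the same $P$/$Q$ decomposition, the same auxiliary sets $\calC,\calE,\calE',\calE''$ and excised coset $Kh$, and your patch $W = h\gamma_0^\mo(F\cap K\gamma_0)$ is literally the paper's added set $h\alpha^\mo F\cap Kh$, with properness and the choice of $h$ justified by the second and third hypotheses exactly as in the paper. The only repackaging is that you obtain $S_i\subseteq Ks_i$ for $i\in Q$ (and the characterization of $P$) from the stabilizer reading of \eqref{Eqn:Cond} via Proposition \ref{Prop:CMinusCSymmPrimeIndex}, where the paper argues directly with $\beta=s_i's_i^\mo$; your ``in particular'' reduction, including the left-versus-right coset subtlety you flag at the end, likewise matches the paper's own (equally terse) treatment.
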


\begin{proof}
On the contrary, let us assume that $(C\setminus E) \cup F$ is a minimal left complement to a subset $S$ of $G$. Let $\ell$ denote the index of $H$ in $G$. Let $s_1, \cdots, s_\ell$ be elements of $S$ such that 
$$H s_i \cap H s_j = \emptyset \quad \text{ for all } i \neq j.$$
For $1\leq i \leq \ell$, let $S_i$ denote the subset of $S$ defined by 
$$S_i : = 
\{s\in S\,|\, Hs = Hs_i\}.$$
By the second condition, $(C\setminus E) \cup F$ is a proper subset of $H$. It follows that $S_i$ contains an element other than $s_i$. Let $P, Q$ denote the sets defined by 
\begin{align*}
P & 
= 
\{i \,|\, 1\leq i \leq \ell, Cs' \neq Cs_i \text{ for some } s'\in S_i\},\\
Q 
& = 
\{i \,|\, 1\leq i \leq \ell, i\notin P\}.
\end{align*}
For $i\in P$,  let $s_i'$ denote an element of $S_i$ such that $Cs_i' \neq Cs_i$. For $i\in Q$, let $s_i'$ denote an element of $S_i$ other than $s_i$. 

Note that there exists a subset $\calC$ of $C$ consisting of certain $K$-right cosets such that $\calC$ contains at most $\ell [(H \setminus C):K]$ many $K$-right cosets and $(\calC \cup F)\cdot S$ contains $(H\setminus C) \cdot \{s_1, \cdots, s_\ell\}$. Moreover, there exists a subset $\calE$ of $C\setminus E$ containing at most $|E|$ elements such that $((\calC\setminus E) \cup \calE \cup F)\cdot S$ contains $(H\setminus C) \cdot \{s_1, \cdots, s_\ell\}$. Further, the set 
$$\calC \cup ((H\setminus C) \cdot  \{s_1's_1^\mo, \cdots, s_\ell' s_\ell^\mo\})$$
contains at most $2\ell  [(H\setminus C):K]$ many $K$-right cosets. By Assumption \ref{Assumption}, it follows that the set $C$ contains a $K$-right coset class $\calR$ which is disjoint from the set 
$$\calC \cup ((H\setminus C) \cdot \{s_1's_1^\mo, \cdots, s_\ell's_\ell^\mo\}).$$
Note that there exists a subset $\calE'$ of $C\setminus E$ containing at most $\ell|E|$ elements such that $(\calE' \cup F) \cdot S$ contains $E\cdot \{s_1', \cdots, s_\ell'\}$. Further, note that there exists a subset $\calE''$ of $C\setminus E$ containing at most $\ell|E|$ elements such that $(\calE'' \cup F) \cdot S$ contains 
$E\cdot \{s_1, \cdots, s_\ell\}$.

Assume that $F\cap K\alpha$ is properly contained in $K\alpha$ for some $\alpha\in H$. Let $h$ be an element of $\calR$ such that $h\notin (E\cdot F^\mo ) \alpha$ (if there were no such $h$, then $(E\cdot F^\mo ) \alpha$ would contain $\calR$, which would imply that $E\cdot F^\mo$ contains a $K$-right coset, contradicting the third condition.). We claim that 
$$Hs_i\subseteq 
(((C\setminus E) \setminus Kh) \cup \calE \cup \calE' \cup \calE'' \cup (h\alpha^\mo F\cap Kh)\cup F )
\cdot S$$
for any $1\leq i \leq \ell$. Since $(\calC\setminus \calE) \cup F$ is contained in $(C\setminus E)\cup F$, the set $((\calC\setminus E)\cup \calE \cup F) \cdot S$ contains $(H\setminus C)\cdot s_i$ and $Kh$ does not intersect with $\calC\setminus E$, it follows that $(H\setminus C)\cdot s_i$ is contained in 
$$((C\setminus E) \setminus Kh) \cup \calE \cup F )
\cdot S.$$
Note that $(C\setminus Kh)\cdot s_i$ is contained in 
$$(E\cdot s_i ) \cup 
\left( 
(((C\setminus E) \setminus Kh) \cup F )
\cdot S
\right),$$
which is contained in 
$$((C\setminus E) \setminus Kh) \cup \calE'' \cup F )
\cdot S.$$

Let $i$ be an element of $P$. Note that $Khs_i$ does not intersect with $(H\setminus C)\cdot \{s_i'\}$. Since $Hs_i = Hs_i'$, it follows that $Khs_i$ is contained in $C\cdot s_i'$. Further, note that $Khs_i$ does not intersect with $Khs_i'$, otherwise, $Khs_i = Khs_i'$. Since $K$ is normal in $H$, it follows that $hKs_i = hKs_i'$, which yields $Ks_i = Ks_i'$, and consequently 
$K\widetilde h s_i' = K\widetilde h s_i$
holds for any $\widetilde h\in H$, contradicting $i\in P$. So $Khs_i$ is contained in $(C\setminus Kh)\cdot s_i'$. Since $(\calE' \cup F) \cdot S$ contains $E\cdot \{s_1', \cdots, s_\ell'\}$, it follows that $Khs_i$ is contained in 
$$((C\setminus E) \setminus Kh) \cup \calE' \cup F )
\cdot S$$
for $i\in P$. 

We choose an element $i\in Q$. Note that the set $S_i$ is contained in $Ks_i$. Otherwise, there exists an element $s_i'\in S_i$ such that $K s_i ' \neq Ks_i$. Let $\beta$ denote the element  $s_i's_i^\mo$ of $H$. Note that $\beta$ does not lie in $K$. By the first condition (i.e. Equation \eqref{Eqn:Cond}), $\beta^{\pm 1} = \gamma ^\mo \delta$ with $\gamma \in C, \delta \in H\setminus C$. If $\beta = \gamma ^\mo \delta$, then $C\beta$ intersects with $H\setminus C$, and hence $C\beta s_i $ intersects with $(H\setminus C)s_i$, i.e., $C s_i' $ intersects with $(H\setminus C)s_i$, which implies that $Cs_i \neq Cs_i'$. If $\beta^\mo = \gamma ^\mo \delta$, then $C\beta^\mo$ intersects with $H\setminus C$, and hence $C\beta^\mo s_i' $ intersects with $(H\setminus C)s_i'$, i.e., $C s_i $ intersects with $(H\setminus C)s_i'$, which implies that $Cs_i '\neq Cs_i$. This shows that $i\in P$, which is a contradiction. So $S_i$ is contained in $Ks_i$. Since $K\alpha s_i$ is contained in $(C\cup F)\cdot S_i$, it follows that $K\alpha s_i$ is contained in $(F\cap K\alpha)\cdot S_i$, and hence 
\begin{align*}
(h\alpha^\mo F \cap Kh)\cdot S_i 
& = 
(h\alpha^\mo (F \cap (\alpha h^\mo Kh)))\cdot S_i \\
& = 
(h\alpha^\mo (F \cap (K \alpha h^\mo h)))\cdot S_i \\
& = 
(h\alpha^\mo (F \cap K \alpha))\cdot S_i \\
& = 
(h\alpha^\mo) ((F \cap K \alpha)\cdot S_i) \\
& \supseteq 
(h\alpha^\mo)  K \alpha s_i \\
& = 
K (h\alpha^\mo)  \alpha s_i \\
& = 
K h s_i.
\end{align*}
It follows that $Khs_i$ is contained in $(h\alpha^\mo F\cap Kh)\cdot S$ for any $i\in Q$. 

This proves the claim that 
$$Hs_i\subseteq 
((C\setminus E) \setminus Kh) \cup \calE \cup \calE' \cup \calE'' \cup (h\alpha^\mo F\cap Kh) \cup F )
\cdot S$$
for all $1\leq i\leq \ell$. So $(C\setminus E) \setminus Kh) \cup \calE \cup \calE' \cup \calE'' \cup (h\alpha^\mo F\cap Kh) \cup F$ is a left complement to $S$. By the second condition, $((C\setminus E) \setminus Kh) \cup \calE \cup \calE' \cup \calE'' \cup (h\alpha^\mo F\cap Kh)$ is a proper subset of $C\setminus E$. Hence $(C\setminus E)\cup F$ is not a minimal left complement to $S$. 

If $(C\setminus E) \cup F$ is a minimal right complement to some subset $T$ of $G$, then $(C^\mo \setminus E^\mo) \cup F^\mo$ is a minimal left complement to $T^\mo$, which is impossible. 

Now we establish the second part. 
Since $H\setminus C$ is the union of finitely many right cosets of $K$, it follows from Proposition \ref{Prop:CMinusCSymmPrimeIndex} that
$$(C^\mo(H\setminus C))
\cup 
((H\setminus C)^\mo C)
= H \setminus K'$$
and $H\setminus C$ is the union of certain left cosets of $K'$ for some subgroup $K'$ of $H$ containing $K$ as a finite index subgroup. 
Since $K'$ contains $K$, it follows from the hypothesis that the set $F\cup X$ does not contain a $K'$-right coset for any subset $X$ of $H$ of size $\leq 2([G:H] + 1) |E|$, and the set $E \cdot F^\mo$ does not contain any $K'$-right coset. Hence from the first part, the result follows. 
\end{proof}

\begin{remark}
By Proposition \ref{Prop:CMinusCSymmPrimeIndex}, the first condition in Theorem \ref{Thm:CMinusCSymm} is equivalent to requiring that $H\setminus C$ cannot be expressed (or equivalently, $C$ cannot be expressed) as the union of certain left cosets of some subgroup $L$ of $H$ satisfying $L\supsetneq K$.
\end{remark}

\begin{remark}
\label{Remark:CMinusCSymm}
Taking $G = \bbZ, H = 2\bbZ, K = 2n\bbZ$, it follows from Theorem \ref{Thm:CMinusCSymm} that for any integer $n\geq 11$, for any $1\leq a < b \leq n$ with 
\begin{equation}
\label{Eqn:CongruenceOdd}
2(a-b) \not\equiv 0\pmod n, 
\end{equation}
the set 
$$((\{2, 4, 6, \cdots, 2n\} \setminus \{2a, 2b\}) + 2n\bbZ)
\cup 
F$$
is not a minimal complement in $\bbZ$ for any proper subset of $F$ of $\{2a, 2b\} + 2n\bbZ$ since 
\begin{align*}
& ((\{2, 4, 6, \cdots, 2n\} \setminus \{2a, 2b\}) + 2n\bbZ)+ (\{-2a,-2b\} + 2n\bbZ)\\
& = ((\{2, 4, 6, \cdots, 2n\} \setminus \{0, 2(b-a)\}) + 2n\bbZ)
\cup 
((\{2, 4, 6, \cdots, 2n\} \setminus \{2(a-b), 0\})  + 2n\bbZ) \\
& 
= \{2, 4, 6, \cdots, 2n\} \setminus \{0\} + 2n\bbZ.
\end{align*}
Note that Equation \eqref{Eqn:CongruenceOdd} holds when $n$ is odd. One can obtain a more general example than the above. Taking $G = \bbZ, H = 2\bbZ, K = 2n\bbZ$, it follows from Proposition \ref{Prop:CMinusCSymmPrimeIndex} and Theorem \ref{Thm:CMinusCSymm} that for any integer $k\geq 2$, $n\geq 5k + 1$ such that $n$ is not divisible by any integer $1< i \leq k$ and for any $1\leq a_1 < a_2 < \cdots < a_k  \leq n$, 
the set 
$$((\{2, 4, 6, \cdots, 2n\} \setminus \{2a_1,2a_2, \cdots, 2a_k\}) + 2n\bbZ)
\cup 
F$$
is not a minimal complement in $\bbZ$ for any proper subset of $F$ of $\{2a_1,2a_2, \cdots, 2a_k\} + 2n\bbZ$. 
\end{remark}

When $F$ is the empty set, Theorems \ref{Thm:FAvoidsOneCoset}, \ref{Thm:FContainedInSingleCoset} are equivalent. One obtains the following consequences. 

\begin{proposition}
\label{Prop:Coset}
If 
\begin{enumerate}
\item the number of elements of $K$ is greater than $2([G:H] + 1) |E|$,
\end{enumerate}
and Assumption \ref{Assumption} holds, then $C\setminus E$ is not a minimal complement in $G$.
\end{proposition}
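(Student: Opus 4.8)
The plan is to deduce this Proposition directly from Theorem \ref{Thm:FAvoidsOneCoset} by taking the set $F$ to be empty. The assertion concerns the set $C\setminus E$, which is precisely $(C\setminus E)\cup F$ when $F=\emptyset$, so it suffices to check that all the hypotheses of Theorem \ref{Thm:FAvoidsOneCoset} are met in this special case and then read off the conclusion.

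First I would verify the two numbered conditions of Theorem \ref{Thm:FAvoidsOneCoset}. Condition (1) asks that $F$ avoid some $K$-right coset contained in $H\setminus C$; since $F=\emptyset$, it avoids \emph{every} such coset, and at least one $K$-right coset is present because Assumption \ref{Assumption} forces $H\setminus C$ to be a nonempty union of finitely many $K$-right cosets. Thus condition (1) holds automatically. Condition (2) — that the number of elements of $K$ exceed $2([G:H]+1)|E|$ — is exactly the single hypothesis of the present Proposition, and Assumption \ref{Assumption} is assumed to hold. Hence Theorem \ref{Thm:FAvoidsOneCoset} applies and yields that $(C\setminus E)\cup\emptyset = C\setminus E$ is not a minimal complement in $G$. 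Equivalently, one may invoke Theorem \ref{Thm:FContainedInSingleCoset} with $F=\emptyset$: its first condition holds since the empty set is contained in any coset, its third condition is vacuous because $E\cdot F^\mo=\emptyset$, and its second condition — that $F\cup X = X$ contain no $K$-right coset whenever $|X|\le 2([G:H]+1)|E|$ — is equivalent to the inequality $|K| > 2([G:H]+1)|E|$. This matches the remark preceding the statement, namely that the two theorems coincide when $F$ is empty.

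There is no substantive obstacle: the Proposition is an immediate specialization of either theorem. The only point requiring a moment's care is confirming that condition (1) of Theorem \ref{Thm:FAvoidsOneCoset} is genuinely satisfied rather than vacuously meaningless, which is guaranteed by the fact that $H\setminus C$ contains at least one $K$-right coset under Assumption \ref{Assumption}.
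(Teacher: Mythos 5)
Your proposal is correct and matches the paper's own route: the paper derives Proposition \ref{Prop:Coset} precisely by specializing $F=\emptyset$ in Theorem \ref{Thm:FAvoidsOneCoset} (equivalently Theorem \ref{Thm:FContainedInSingleCoset}), noting as you do that the two theorems coincide in that case. The only cosmetic slip is attributing the nonemptiness of $H\setminus C$ to Assumption \ref{Assumption}; it actually follows from the standing hypothesis that $C$ is a \emph{proper} subset of $H$ whose complement is a union of finitely many $K$-right cosets, but this does not affect the argument.
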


\begin{remark}
\label{Remark:Coset}
It follows from Proposition \ref{Prop:Coset} that $\{2, 4, 6, 8, 10\} + 12\bbZ$ is not a minimal complement in $\bbZ$. Taking $G = H = \bbR, K = \mathbb Q$, it follows from Proposition \ref{Prop:Coset} that the set of irrational numbers is not a minimal complement in $\bbR$. Taking $G = H = \bbC, K = \overline{\mathbb Q}$, it follows from Proposition \ref{Prop:Coset} that the set of transcendental numbers is not a minimal complement in $\bbC$. 
\end{remark}

When $K$ is the trivial subgroup and $E$ is the emptyset in Proposition \ref{Prop:Coset}, one obtains the following results. 

\begin{proposition}
\label{Prop:SansK}
If $H\setminus C$ is finite and $C$ contains more than $2[G:H] |H\setminus C|$ elements, i.e., the relative quotient of $C$ with respect to $H$ satisfies 
$$\lambda_H(C) > 2[G:H],$$
then $C$ is not a minimal complement to any subset of $G$. In particular, if $D$ is a proper subset of $G$ such that $G \setminus D$ is finite and $D$ contains more than $2|G\setminus D|$ elements, then $D$ is not a minimal complement to any subset of $G$. 
\end{proposition}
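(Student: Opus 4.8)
The plan is to deduce both assertions directly from Proposition \ref{Prop:Coset} by specializing the normal subgroup $K$ to the trivial subgroup $\{e\}$ and taking $E$ to be the empty set. First I would record what the standing hypotheses of the section mean in this degenerate situation. When $K = \{e\}$, every $K$-right coset is a singleton, so any subset of $H$ is automatically a union of $K$-right cosets, the requirement that $H\setminus C$ be a union of finitely many $K$-right cosets becomes exactly the requirement that $H\setminus C$ be finite, and the number of $K$-right cosets contained in a set equals its cardinality. Consequently $[(H\setminus C):K] = |H\setminus C|$ and the number of $K$-right cosets contained in $C$ equals $|C|$, so Assumption \ref{Assumption} reads $|C| > 2[G:H]\,|H\setminus C|$, which is precisely the hypothesis $\lambda_H(C) > 2[G:H]$.

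Next I would verify the numerical condition of Proposition \ref{Prop:Coset}. Taking $E = \emptyset$ gives $|E| = 0$, so the required inequality $|K| > 2([G:H]+1)\,|E|$ becomes $1 > 0$, which holds trivially. Since $C$ is a proper subset of $H$ by the standing assumption of the section, all hypotheses of Proposition \ref{Prop:Coset} are met, and its conclusion yields that $C\setminus E = C$ is not a minimal complement in $G$. This establishes the first assertion, the ``not a minimal complement in $G$'' phrasing covering both the left and right cases as in Proposition \ref{Prop:Coset}.

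For the ``in particular'' statement I would specialize the first assertion to the case $H = G$, so that $[G:H] = 1$. Writing $C = D$, the hypotheses that $G\setminus D$ be finite and that $|D| > 2|G\setminus D|$ are exactly the conditions that $H\setminus C$ be finite and that $\lambda_H(C) = |D|/|G\setminus D| > 2 = 2[G:H]$, whence the first assertion applies verbatim. I do not expect any genuine obstacle here; the only point requiring care is the bookkeeping in the degenerate cases $K = \{e\}$ and $E = \emptyset$, in particular confirming that the condition of Proposition \ref{Prop:Coset} survives as the strict inequality $1 > 0$, and that the proper-subset hypothesis $C \subsetneq H$ (equivalently $H\setminus C \neq \emptyset$) remains in force so that $\lambda_H(C)$ is defined.
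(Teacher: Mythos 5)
Your proposal is correct and follows exactly the paper's own route: the paper introduces Proposition \ref{Prop:SansK} with the words ``When $K$ is the trivial subgroup and $E$ is the emptyset in Proposition \ref{Prop:Coset}, one obtains the following results,'' and its proof consists precisely of deducing the first assertion from Proposition \ref{Prop:Coset} and the second from the first by taking $H = G$. Your additional bookkeeping (singleton cosets, Assumption \ref{Assumption} reducing to $|C| > 2[G:H]\,|H\setminus C|$, and the inequality $|K| > 2([G:H]+1)|E|$ becoming $1 > 0$) is accurate and simply makes explicit what the paper leaves implicit.
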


\begin{proof}
The first part follows from Proposition \ref{Prop:Coset}. The second past follows from the first part as a consequence. 
\end{proof}

\begin{remark}
It follows from Proposition \ref{Prop:SansK} that for any positive integer $k$ and for any nonempty finite subset $F$ of $k\bbZ$, the set $k\bbZ\setminus F$ is not a minimal complement in $\bbZ$. 
\end{remark}

In the context of finite groups, one has the following consequence of Proposition \ref{Prop:SansK}. See also \cite[Proposition 17]{AlonKravitzLarson}. 

\begin{proposition}
\label{Prop:Fini}
If $G$ is finite and the relative quotient of $C$ with respect to $H$ satisfies 
$$\lambda_H(C) > 2[G:H],$$
i.e., $C$ is a subset of $H$ satisfying 
$$ |H| > |C| > 2[G:H] |H\setminus C|,$$
then $C$ is not a minimal complement to any subset of $G$. Equivalently, no subset $C$ of a subgroup $H$ of a finite group $G$ satisfying 
$$
|H| \frac{2[G:H]} {1 + 2[G:H]}
= 
\frac{2|G||H| } {|H| + 2|G|}
< |C| 
< |H|
$$
is a minimal complement to some subset of $G$. In particular, if $C$ is a proper subset of a finite group $G$ containing more than $2|G\setminus C|$ elements, then $C$ is not a minimal complement in $G$. 
\end{proposition}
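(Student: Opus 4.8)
The plan is to deduce all three assertions from Proposition~\ref{Prop:SansK} together with an elementary rearrangement of the displayed inequalities. For the main statement I observe that since $G$ is finite, so is $H$, and hence $H\setminus C$ is finite; moreover $C\subsetneq H$ gives $|H\setminus C|\geq 1$, so $\lambda_H(C)=|C|/|H\setminus C|$ is well defined. The hypothesis $\lambda_H(C)>2[G:H]$ is by definition the assertion that $C$ contains more than $2[G:H]\,|H\setminus C|$ elements, which is exactly the condition displayed as $|C|>2[G:H]\,|H\setminus C|$. These are precisely the hypotheses of Proposition~\ref{Prop:SansK}, which then yields that $C$ is not a minimal complement to any subset of $G$; so nothing beyond invoking that proposition is needed here.

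Next I would verify the \emph{equivalently} reformulation, which is the only place a computation is required. Writing $[G:H]=|G|/|H|$ and $|H\setminus C|=|H|-|C|$ (valid because $C\subseteq H$), the inequality $|C|>2[G:H]\,|H\setminus C|$ becomes
$$|C|\,|H| > 2|G|\,(|H|-|C|),$$
which rearranges to $|C|\,(|H|+2|G|) > 2|G|\,|H|$, i.e.
$$|C| > \frac{2|G|\,|H|}{|H|+2|G|} = |H|\,\frac{2[G:H]}{1+2[G:H]}.$$
Together with the constraint $|C|<|H|$ coming from $C\subsetneq H$, this is exactly the two-sided inequality in the statement. Running the same computation in reverse gives the converse implication, so the two forms are equivalent. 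This computation simultaneously justifies the claim made in the introduction that \cite[Proposition~17]{AlonKravitzLarson} can be restated as \eqref{Eqn:AKLRel} together with $C\subsetneq H$.

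Finally I would dispose of the \emph{in particular} assertion by specializing to $H=G$. Then $[G:H]=1$ and $H\setminus C=G\setminus C$, so the condition $|C|>2|G\setminus C|$ is precisely $\lambda_G(C)>2=2[G:H]$, and the first part applies to give that $C$ is not a minimal complement in $G$. I do not expect any genuine obstacle in this proof: once Proposition~\ref{Prop:SansK} is available the content is entirely formal, and the only care required is to keep the strict inequalities and the properness $C\subsetneq H$ aligned when passing between the relative-quotient form and the form involving $|G|$ and $|H|$.
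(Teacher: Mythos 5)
Your proposal is correct and follows essentially the same route as the paper: the first and third assertions are deduced directly from Proposition~\ref{Prop:SansK} (the paper cites it for both, exactly as you do via the specialization $H=G$), and the ``equivalently'' form is obtained by the same elementary rearrangement of $|C| > 2[G:H]\,|H\setminus C|$ into $|C| > \frac{2|G||H|}{|H|+2|G|}$, differing only in whether one clears denominators by multiplying by $|H|$ or by adding $2[G:H]\,|C|$ to both sides.
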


\begin{proof}
The first statement and the third statement follow from Proposition \ref{Prop:SansK}. 

To obtain the second statement, note that for a subset $C$ of $H$, the inequality $|C| > 2[G:H] |H\setminus C|$ is equivalent to 
$$
(2[G:H] + 1)|C| 
> 
2[G:H] |C| + 2[G:H] |H\setminus C| 
= 2[G:H] |H|
= 2|G| 
,$$
which is equivalent to 
$$|C| > \frac{2|G||H| } {|H| + 2|G|}.$$
Then the second part follows from the first part. 
\end{proof}

\begin{remark}
It follows from Proposition \ref{Prop:Fini} that the set $\{\overline 2, \overline 4, \overline 6, \overline 8, \overline{10}\}$ is not a minimal complement in $\bbZ/12\bbZ$. 
\end{remark}

\begin{theorem}
\label{Thm:Top}
Let $\calH $ be a finite index subgroup of a topological group $\calG $. Let $\calC $ be a proper subset of $\calH $. Suppose $\calH \setminus \calC $ is compact and closed\footnote{Note that the compact subsets need not be closed unless the ambient topological space is assumed to be Hausdorff.} in $\calH $. If $\calH $ is not a union of finitely many translates of $\calH \setminus \calC $, then $\calC $ is not a minimal complement in $\calG$. In particular, if $\calC $ is a proper subset of a topological group of $\calG $ such that $\calG \setminus \calC $ is closed and compact, and $\calG \setminus \calC $ is ``small'' in the sense that $\calG $ is not a union of finitely many translates of $\calG \setminus \calC $, then $\calC $ is not minimal complement in $\calG $. 
\end{theorem}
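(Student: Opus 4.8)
The plan is to argue by contradiction in the spirit of the proof of Theorem \ref{Thm:FAvoidsOneCoset}, with the finiteness/counting input there replaced by a compactness argument. Suppose $\calC$ is a minimal left complement to some subset $S$ of $\calG$, and set $D := \calH\setminus\calC$, which is nonempty since $\calC$ is proper. Let $\ell = [\calG:\calH]$. Since $\calC\subseteq\calH$ and $\calC\cdot S = \calG$, the set $S$ meets every right coset of $\calH$; choose $s_1,\dots,s_\ell\in S$ lying in pairwise distinct cosets and put $S_i := \{s\in S : \calH s = \calH s_i\}$. As in the earlier proofs one checks $\calH s_i = \calC\cdot S_i$, and since $\calC\subsetneq\calH$ this forces $\lvert S_i\rvert\geq 2$; fix $s_i'\in S_i$ with $s_i'\neq s_i$. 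The goal is to produce a single element $h\in\calC$ for which $\calC\setminus\{h\}$ is still a left complement to $S$, contradicting minimality.

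The key point --- and where the hypotheses enter --- is that $D$ is compact \emph{and closed} in $\calH$, so $\calC = \calH\setminus D$ is open in $\calH$. For each $i$, right translation by $s\in S_i$ is a homeomorphism carrying $\calC$ to the open subset $\calC s$ of $\calH s_i$, and these sets cover $\calH s_i = \calC\cdot S_i\supseteq Ds_i$. Since $Ds_i$ is compact (a homeomorphic image of $D$), I can extract a finite subcover: there is a finite $S_i^0\subseteq S_i$ with $Ds_i\subseteq\calC\cdot S_i^0$. Choosing for each $d\in D$ a representative $d s_i = c\, s$ with $c\in\calC$, $s\in S_i^0$, and writing $c = d\,(s_i s^{-1})$, the resulting set $\calA\subseteq\calC$ of all such $c$ (over all $d$ and all $i$) satisfies $D\cdot\{s_1,\dots,s_\ell\}\subseteq\calA\cdot S$ and, crucially, is contained in the finitely many right translates $\{D\,(s_i s^{-1}) : 1\le i\le\ell,\ s\in S_i^0\}$ of $D$.

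Now consider the set $\calA\cup\bigl(\bigcup_{i=1}^\ell D\,(s_i's_i^{-1})\bigr)\cup D$. Each piece is a finite union of right translates of $D$, so the whole set is a finite union of translates of $D$. If it contained $\calC$, then $\calH = \calC\cup D$ would be a union of finitely many translates of $D$, contradicting the hypothesis. Hence there exists $h\in\calC$ with $h\notin\calA$ and $h\notin D\,(s_i's_i^{-1})$ for every $i$.

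Finally I verify that $\calC\setminus\{h\}$ is a left complement to $S$ by showing $\calH s_i\subseteq(\calC\setminus\{h\})\cdot S$ for each $i$. The part $Ds_i$ is covered since $Ds_i\subseteq\calA\cdot S$ and $h\notin\calA$; the part $(\calC\setminus\{h\})s_i$ is covered trivially; and for the single remaining point $h s_i$ I use $s_i'$: because $h\notin D\,(s_i's_i^{-1})$ we have $h s_i\notin Ds_i'$, so $h s_i\in\calC s_i'$ (using $\calH s_i=\calH s_i'=\calC s_i'\sqcup Ds_i'$), say $h s_i = c' s_i'$; and $c'\neq h$, since otherwise $s_i=s_i'$. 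Thus $\calC\setminus\{h\}$ is a left complement to $S$, contradicting minimality. The right-complement case follows by passing to inverses, as in Theorem \ref{Thm:FAvoidsOneCoset}, and the ``in particular'' statement is the case $\calH=\calG$. The main obstacle is that $D$ is infinite, so the counting argument of the discrete case is unavailable; the essential new idea is to exploit closedness of $D$ to make $\calC$ open and thereby replace counting by extraction of a finite subcover.
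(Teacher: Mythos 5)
Your proof is correct and takes essentially the same route as the paper's: both arguments use closedness of $\calH\setminus\calC$ to make $\calC$ open in $\calH$, compactness to extract a finite subcover of the translates of $\calH\setminus\calC$, the non-covering hypothesis to produce an $h\in\calC$ avoiding a finite union of right translates of $\calH\setminus\calC$, and a second element of $S$ in the same coset to recover the point $hs_i$; the differences (coset-by-coset subcover and your witness set $\calA$ in place of the paper's $\scrC\cdot(S\cdot T'^{-1})$) are purely bookkeeping. One wording slip only: $\calA$ is \emph{contained in}, not equal to, a finite union of right translates of $D$, so $h$ should be chosen outside that enlarged union of translates --- which is what your argument effectively does.
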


\begin{proof}
On the contrary, let us assume that $\calC $ is a minimal left complement to some subset $T$ of $\calG $. Let $S$ be a subset of $T$ such that $\calH \cdot S = \calG $, and $\calH  s_1 \cap \calH  s_2 = \emptyset$ for any two distinct elements $s_1, s_2\in S$. Since $\calC$ is a proper subset of $\calH $, for each $s\in S$, there exists an element $t_s\in T$ such that $t_s \neq s$ and $\calH  s = \calH  t_s$. 
Since $\scrC $ is compact and $S$ is finite, there is a nonempty finite subset $T'$ of $T$ such that $\{\calC \cdot s\}_{s\in T'}$ is an open cover of $\scrC \cdot S$. From the hypothesis, it follows that the subgroup $\calH $ strictly contains 
$$(\calH  \cap (\cup_{x\in S\cdot T'^\mo} \scrC \cdot x)) \cup (\cup_{s\in S} \scrC \cdot t_ss^\mo ) \cup \scrC ,$$
and hence there is an element $h\in \calH $ lying outside this union. We claim that $\calC \setminus\{h\}$ is a left complement to $T$. Note that $\calC \setminus \{h\}$ is nonempty. It suffices to show that $\calH s$ is contained in $(\calC \setminus \{h\})\cdot T$ for each $s\in S$. Let $k$ be an element of $\scrC $. Then $ks$ is equal to $ct'$ for some $c\in \calC , t'\in T'$. So $c$ lies in the above union and hence $h \neq c$. Thus $ks$ lies in $(\calC \setminus \{h\})\cdot T$. So $\scrC \cdot s$ is contained in $(\calC \setminus \{h\})\cdot T$. Note that $hs$ lies in $\calH t_s$ and does not lie in $\scrC t_s$. Thus $hs$ lies in $\calC \cdot t_s$. Since $s\neq t_s$, it follows that $hs$ lies in $(\calC \setminus \{h\})\cdot t_s$. Clearly, $(\calC \setminus \{h\}) s$ is contained in $(\calC \setminus \{h\})\cdot T$. So $\calH \cdot s$ is contained in $(\calC \setminus \{h\})\cdot T$. Thus $\calC \setminus \{h\}$ is a minimal left complement to $T$. Note that $h$ lies in $\calH $ and does not lie in $\scrC $. So $\calC \setminus \{h\}$ is a proper subset of $\calC $. Hence $\calC $ is not a minimal left complement to $T$. Similarly, assuming $\calC $ to be a right minimal complement to some subset of $\calG $ will lead to a contradiction. Hence $\calC $ is not a minimal complement in $\calG $. 

The second statement follows from the first statement. 
\end{proof}

\begin{remark}
\label{Remark:Top}
From Theorem \ref{Thm:Top}, it follows that the set of real numbers having absolute value greater than one is not a minimal complement in $\bbR$. 
\end{remark}

\begin{corollary}
Let $H$ be a finite index subgroup of an infinite group and $C$ be a proper subset of $H$ such that $H\setminus C$ is finite. 
Then $C$ is not a minimal complement in $G$. 
\end{corollary}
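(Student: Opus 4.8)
The plan is to deduce this corollary directly from Proposition~\ref{Prop:SansK}, whose only hypotheses are that $H\setminus C$ be finite and that $C$ contain more than $2[G:H]\,|H\setminus C|$ elements. The first hypothesis is given outright. For the second, the decisive observation is the elementary group-theoretic fact that a finite-index subgroup of an infinite group is itself infinite: since $[G:H]$ is finite and $G$ is infinite, $H$ must be infinite. As $H\setminus C$ is finite, the set $C=H\setminus(H\setminus C)$ is cofinite in $H$ and hence infinite, so $|C|$ certainly exceeds the finite quantity $2[G:H]\,|H\setminus C|$; equivalently, $\lambda_H(C)>2[G:H]$. Proposition~\ref{Prop:SansK} then asserts that $C$ is not a minimal complement to any subset of $G$, which is precisely the claim.

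First I would note that, because $C$ is a proper subset of $H$, the set $H\setminus C$ is nonempty, so $|H\setminus C|\geq 1$ and the right-hand side $2[G:H]\,|H\setminus C|$ is a genuinely positive finite number; this is all that is needed for the cardinality comparison $|C|>2[G:H]\,|H\setminus C|$ to hold. There is essentially no analytic or combinatorial obstacle here — the entire content of the corollary is the passage from ``$H$ has finite index in an infinite group'' to ``$H$ is infinite, hence $C$ is infinite'', after which the hypotheses of Proposition~\ref{Prop:SansK} are automatically met. The only point requiring a moment's care is formally reading the relative-quotient inequality in the infinite setting, but the cardinality formulation of Proposition~\ref{Prop:SansK} sidesteps this.

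For completeness I would also indicate an alternative route through Theorem~\ref{Thm:Top}, obtained by equipping $G$ with the discrete topology. In that topology every subset is closed and every finite subset is compact, so $H\setminus C$ is closed and compact in $H$. Moreover $H$, being infinite, cannot be written as a union of finitely many translates of the finite set $H\setminus C$, since each such translate is finite and a finite union of finite sets is finite. Theorem~\ref{Thm:Top} then yields that $C$ is not a minimal complement in $G$. The single fact to verify in this approach is again that $H$ is infinite while $H\setminus C$ is finite, so that the ``smallness'' condition of Theorem~\ref{Thm:Top} is satisfied.
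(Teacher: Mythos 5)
Your proposal is correct and takes essentially the same approach as the paper: the paper's own proof consists of exactly the two routes you describe, namely endowing $G$ with the discrete topology and invoking Theorem \ref{Thm:Top}, together with the remark that the statement is also an immediate consequence of Proposition \ref{Prop:SansK}. Your write-up simply fills in the (easy) verification that $H$ is infinite, hence $C$ is infinite and the hypotheses of both results hold.
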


\begin{proof}
If $G$ is endowed with the discrete topology, then this corollary follows from Theorem \ref{Thm:Top}. 

It can also be seen as an immediate consequence of Proposition \ref{Prop:SansK} (and also of Proposition \ref{Prop:Coset}).
\end{proof}

\begin{corollary}
For any positive integer $k$ and for any nonempty finite subset $F$ of $k\bbZ$, the set $k\bbZ\setminus F$ is not a minimal complement in $\bbZ$. 
\end{corollary}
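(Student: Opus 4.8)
The plan is to derive this statement directly from Proposition~\ref{Prop:SansK}, which asserts that a subset $C$ of a finite index subgroup $H$ of a group $G$ fails to be a minimal complement whenever $H\setminus C$ is finite and $|C| > 2[G:H]\,|H\setminus C|$. The natural choice here is to take $G = \bbZ$, $H = k\bbZ$, and $C = k\bbZ\setminus F$, so that $C$ is precisely the set whose non-minimality we wish to establish.

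First I would verify that $H = k\bbZ$ is a finite index subgroup of $G = \bbZ$; indeed $[G:H] = k$, which is finite. Next, observe that $H\setminus C = (k\bbZ)\setminus(k\bbZ\setminus F) = F$, which is a nonempty finite set by hypothesis, so the finiteness requirement on $H\setminus C$ is met, and $C$ is a proper subset of $H$. It then remains to check the size inequality $|C| > 2[G:H]\,|H\setminus C| = 2k\,|F|$.

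This last step is where the only genuine observation lies, though it is immediate: since $F$ is finite, $C = k\bbZ\setminus F$ is infinite, so $|C|$ exceeds the finite quantity $2k\,|F|$ trivially. Equivalently, the relative quotient $\lambda_H(C) = |C|/|H\setminus C|$ is infinite and hence certainly larger than $2[G:H] = 2k$. All hypotheses of Proposition~\ref{Prop:SansK} are thus satisfied, and we conclude that $k\bbZ\setminus F$ is not a minimal complement to any subset of $\bbZ$. There is no real obstacle to overcome; the content of the corollary is entirely contained in Proposition~\ref{Prop:SansK}, applied to the infinite subgroup $k\bbZ$ with its finite complement removed.
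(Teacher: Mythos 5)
Your proposal is correct and matches the paper's own route: the paper itself notes (in the remark following Proposition~\ref{Prop:SansK}, and via the immediately preceding corollary, whose proof cites Proposition~\ref{Prop:SansK}) that this statement follows by taking $G = \bbZ$, $H = k\bbZ$, $C = k\bbZ\setminus F$, exactly as you do. Your verification that $[G:H]=k$ is finite, $H\setminus C = F$ is finite and nonempty, and $|C|$ is infinite (so the inequality $\lambda_H(C) > 2[G:H]$ holds trivially) is precisely the intended argument.
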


It turns out that the set of irrational numbers is not a minimal complement in $\bbR$ (see Remark \ref{Remark:Coset}). 
It is natural to ask whether the set of irrational numbers, with a countable number of points removed, is a minimal complement in $\bbR$. Theorem \ref{Thm:Top} does not seem to shed any light on this question since the set of irrational numbers, with a countable number of points removed, does not form a closed or a compact set under the Euclidean topology. 

\begin{theorem}
\label{Thm:Cardi}
Let $\calH$ be a subgroup of a group $\calG$. Let $\calC$ be a proper subset of $\calH$. Suppose Assumption \ref{Assumption} holds in the sense that no map from $\{0, 1\} \times (\calH\setminus \calC) \times (\calG/\calH)$ to $\calC$ is surjective. Then $\calC$ is not a minimal left complement in $\calG$.
\end{theorem}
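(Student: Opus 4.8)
The plan is to argue by contradiction in the spirit of the proof of Theorem \ref{Thm:FAvoidsOneCoset}, specialised to a trivial $K$ and empty $E$ and $F$ (that is, the argument underlying Proposition \ref{Prop:SansK}), but with every finite count replaced by a comparison of cardinalities. First I would record the meaning of the hypothesis: for a nonempty target, a surjection from a set $A$ onto $\calC$ exists precisely when $|\calC| \le |A|$, so the assumed nonexistence of a surjection from $\{0,1\} \times (\calH\setminus\calC) \times (\calG/\calH)$ onto $\calC$ is exactly the strict inequality
$$|\calC| > 2\,|\calH\setminus\calC|\,[\calG:\calH],$$
the cardinal analogue of Assumption \ref{Assumption}. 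Assume then that $\calC$ is a minimal left complement to some $S \subseteq \calG$, so that $\calC\cdot S = \calG$; the goal is to locate a single $h \in \calC$ with $(\calC\setminus\{h\})\cdot S = \calG$, contradicting minimality.

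Next I would set up the coset bookkeeping. Since $\calC \subseteq \calH$, the identity $\calC\cdot S = \calG$ forces $\calH\cdot S = \calG$, so every right coset of $\calH$ meets $S$; choose representatives $\{s_i\}_{i\in I} \subseteq S$, one per coset, with $|I| = [\calG:\calH]$, and put $S_i = \{s\in S \mid \calH s = \calH s_i\}$. Because $\calC s \subseteq \calH s$ for each $s$, intersecting $\calC\cdot S = \calG$ with $\calH s_i$ yields $\calC\cdot S_i = \calH s_i$ for every $i$. The properness of $\calC$ in $\calH$ then forces $S_i \neq \{s_i\}$ (otherwise $\calC\cdot S_i = \calC s_i \subsetneq \calH s_i$), so I fix an element $s_i' \in S_i$ with $s_i' \neq s_i$.

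I would then construct a ``small'' obstruction set and use the cardinality hypothesis to dodge it. For each $i$ and each $y \in \calH\setminus\calC$ the point $ys_i$ lies in $\calC\cdot S_i$, so choosing one witness $c_{y,i}\in\calC$ for every such pair produces a set $\calC_0 \subseteq \calC$ of cardinality at most $|\calH\setminus\calC|\cdot|I|$ with $(\calH\setminus\calC)\cdot\{s_i\}_{i\in I} \subseteq \calC_0\cdot S$. Form
$$U = \calC_0 \cup \bigcup_{i\in I} (\calH\setminus\calC)\, s_i' s_i^{-1},$$
whose two pieces each have cardinality at most $|\calH\setminus\calC|\cdot|I|$, so $|U| \le 2\,|\calH\setminus\calC|\,[\calG:\calH]$. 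The hypothesis gives $|U| < |\calC|$, hence an element $h \in \calC\setminus U$.

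Finally I would check that $\calC\setminus\{h\}$ still complements $S$, coset by coset. A point of $(\calH\setminus\calC)s_i$ is recovered via $\calC_0$, whose members differ from $h$; a point $cs_i$ with $c\neq h$ is retained verbatim; and the one delicate point $hs_i$ is rewritten as $\bigl(h s_i (s_i')^{-1}\bigr)s_i'$ with $s_i'\in S_i$. Here $h s_i (s_i')^{-1}$ lies in $\calH$, and the condition $h \notin (\calH\setminus\calC)\,s_i' s_i^{-1}$ is precisely what forces $h s_i (s_i')^{-1} \notin \calH\setminus\calC$, hence into $\calC$, while $s_i \neq s_i'$ makes this coefficient distinct from $h$. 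Thus $(\calC\setminus\{h\})\cdot S = \calG$, contradicting minimality. I expect the only real obstacle to be organisational rather than computational: compressing the requirement that $h$ simultaneously avoid every ``bad'' rewriting coefficient across the possibly infinitely many cosets into the single comparison $|U|<|\calC|$, and verifying that the factor $2$ (equivalently the $\{0,1\}$ factor in the hypothesis) exactly accounts for the two pieces $\calC_0$ and $\bigcup_{i} (\calH\setminus\calC)\, s_i' s_i^{-1}$ of $U$.
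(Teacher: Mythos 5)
Your proposal is correct and follows essentially the same argument as the paper: choose coset representatives $s_i \in S$ and alternates $s_i' \in S_i$, build the witness set $\calC_0$ (the paper's $\scrC$) together with the translates $(\calH\setminus\calC)\,s_i's_i^\mo$, use the hypothesis to extract an element $h \in \calC$ avoiding their union, and verify $(\calC\setminus\{h\})\cdot S = \calG$ coset by coset. The only cosmetic differences are that you restate the no-surjection hypothesis as the cardinal inequality $|\calC| > 2\,|\calH\setminus\calC|\,[\calG:\calH]$ (equivalent under the axiom of choice, which both proofs use anyway when selecting witnesses) and that you pick $s_i'$ merely distinct from $s_i$ where the paper insists on $\calC s_i' \neq \calC s_i$; neither change affects the validity of the argument.
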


\begin{proof}
On the contrary, let us assume that $\calC$ is a minimal left complement to a subset $\calS$ of $\calG$. 
Let $\{s_i\}_{i\in \Lambda}$ be elements of $\calS$ such that $\calG = \cup_{i\in \Lambda} \calH s_i$ and 
$$\calH s_i \cap \calH s_j = \emptyset \quad \text{ for all } i \neq j.$$
Since $\calC$ is a proper subset of $\calH$, it follows that for each $i\in \Lambda$, there exists an element $s_i'$ such that $\calH s_i' = \calH s_i$ and $\calC s_i' \neq \calC s_i$. 
For each $(a, i)  \in (\calH \setminus \calC) \times \Lambda$, choose an element $(c_{(a, i)}, s_{(a, i)})$ in $\calC \times \calS$ such that $c_{(a, i)}s_{(a, i)} = as_i$.
Consider the map 
$$(\calH \setminus \calC) \times \Lambda
\to 
\calC$$
defined by 
$$(a, i) \mapsto c_{(a, i)}.$$
Denote the image of this map by $\scrC$.
Note that $\scrC\cdot \calS$ contains $(\calH\setminus \calC) \cdot \{s_i\,|\, i\in \Lambda\}$. Consider the map 
$$\{0, 1\} \times (\calH\setminus \calC) \times \Lambda
\to 
\calC$$
defined by 
$$
(*, a, i) \mapsto 
\begin{cases}
c_{(a, i)} & \text{ if } * = 0, \\
as_i's_i^\mo & \text{ if } * = 1.
\end{cases}
$$
By the hypothesis, $\calC$ contains an element $h$ which lies outside the image of this map, i.e., $h$ avoids the set 
$$\scrC \cup ((\calH\setminus \calC) \cdot  \{s_i's_i^\mo\,|\, i\in \Lambda \}).$$

We claim that 
$$\calH s_i\subseteq 
(\calC \setminus \{h\} ) 
\cdot \calS$$
for any $i\in \Lambda$. Since $\scrC$ is contained in $\calC$, the set $\scrC \cdot \calS$ contains $(\calH\setminus \calC)\cdot s_i$ and $h$ does not lie in $\scrC$, it follows that $(\calH\setminus \calC)\cdot s_i$ is contained in $(\calC \setminus \{h\})\cdot \calS$. Note that $hs_i$ does not lie in $(\calH\setminus \calC) s_i'$. Since $\calH s_i = \calH s_i'$, it follows that $hs_i$ belongs to $\calC \cdot s_i'$. Hence $hs_i$ lies in $(\calC \setminus \{h\}) \cdot s_i'$. Moreover, $(\calC \setminus \{h\}) \cdot s_i$ is contained in $(\calC \setminus \{h\}) \cdot \calS$. This proves the claim that $\calH s_i$ is contained in $(\calC \setminus \{h\}) \cdot \calS$ for any $i\in \Lambda$. Hence $\calC$ is not a minimal left complement to $\calS$. 

If $\calC$ is a minimal right complement to some subset $\calT$ of $\calG$, then $\calC^\mo$ is a minimal left complement to $\calT^\mo$, which is impossible. 
\end{proof}

\begin{remark}
\label{Remark:Cardi}
It follows from Theorem \ref{Thm:Cardi} that given any uncountable group $G$, no proper subset $C$ of $G$ having countable set-theoretic complement in $G$ is a minimal complement in $G$. In particular, 
\begin{enumerate}
\item 
the set of irrational numbers, with a countable number of points removed, is not a minimal complement in $\bbR$, 
\item 
the set of transcendental numbers, with a countable number of points removed, is not a minimal complement in $\bbC$. 
\end{enumerate}
\end{remark}

\section{On robust non-minimal complements} 
\label{Sec:RobustNonMac}

In an abelian group, a minimal complement is often called a minimal additive complement, abbreviated as MAC \cite{BurcroffLuntzlara}. Following \cite[Definition 5]{BurcroffLuntzlara}, one can consider the notion of robust MAC and robust non-MAC in any abelian group $G$. 

\begin{definition}
\label{Defn:Robust}
Let $G$ be an abelian group. A subset $C$ of $G$ is said to be a \textnormal{robust MAC} if any non-empty subset $D$ of $G$ having finite symmetric difference with $C$ is a MAC in $G$. A subset $C$ of $G$ is said to be a \textnormal{robust non-MAC} if any non-empty subset $D$ of $G$ having finite symmetric difference with $C$ is a non-MAC in $G$. 
\end{definition}

Kwon proved that the finite subsets of the integers are robust MACs \cite[Theorem 9]{Kwon}. In \cite{CoMin1}, the authors showed that the finite subsets of any free abelian group of rank $\geq 1$ are robust MACs. Alon--Kravitz--Larson established that the finite subsets in any infinite abelian group are robust MACs \cite[Theorem 2]{AlonKravitzLarson}. Burcroff--Luntzlara proved results which provides several examples of infinite subsets of $\bbZ$ which are robust MACs and several examples of infinite subsets of $\bbZ$ which are robust non-MACs \cite[Theorems 3, 5]{BurcroffLuntzlara}.
As a corollary of Theorem \ref{Thm:QLeavesLAppears}, one also obtains examples of robust non-MACs. 

\begin{corollary}
Suppose $C$ is a subset of $\bbZ$ and it is the union of translates of a nonzero subgroup $K$ of $\bbZ$. 
If $\lambda_\bbZ(C) > 2$, then 
$(C\setminus E) \cup F$ is a robust non-MAC in $\bbZ$ for any finite subset $E$ of $C$ and for any subset $F$ of $\bbZ\setminus C$ such that $(\bbZ\setminus C)\setminus F$ is infinite. 
\end{corollary}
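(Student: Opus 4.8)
The plan is to deduce this corollary as a direct application of Theorem~\ref{Thm:QLeavesLAppears}, mirroring the non-robust corollary that already appears in the excerpt. The key observation is that the notion of a robust non-MAC is, by Definition~\ref{Defn:Robust}, simply the statement that \emph{every} set with finite symmetric difference from $C$ is a non-MAC; and any such set has the shape $(C'\setminus E')\cup F'$ for a finite $E'\subseteq C'$ and finite $F'$ lying outside $C'$. So the whole point is to check that the hypotheses of Theorem~\ref{Thm:QLeavesLAppears} survive after we throw in the finitely many points prescribed by the robustness definition, uniformly over all such perturbations.

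Concretely, first I would fix a subset $D$ of $\bbZ$ with finite symmetric difference from $(C\setminus E)\cup F$, and rewrite it as $(C\setminus \widetilde E)\cup \widetilde F$ where $\widetilde E$ is a finite subset of $C$ (absorbing both $E$ and the finitely many points of $C$ removed in passing from $C$ to $D$) and $\widetilde F$ is a subset of $\bbZ\setminus C$ (absorbing $F$ together with finitely many added points, while possibly deleting some; note $\widetilde F\subseteq F\cup(\text{finite set})$). Here I use the hypothesis that $(\bbZ\setminus C)\setminus F$ is infinite: this guarantees that no matter which finite set of points we adjoin, $\widetilde F$ remains a \emph{proper} subset of $\bbZ\setminus C$, since an infinite reservoir of points of $\bbZ\setminus C$ stays outside $\widetilde F$. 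This is exactly the first hypothesis of Theorem~\ref{Thm:QLeavesLAppears} (that $F$ be a proper subset of $H\setminus C$), and it is the place where the infinitude condition is indispensable.

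Next I would verify the structural hypotheses. Take $G=H=\bbZ$ and $K$ the given nonzero subgroup, so that $[G:H]=1$ and $C$ is a union of $K$-cosets with $\lambda_\bbZ(C)>2=2[G:H]$, which is precisely Assumption~\ref{Assumption}. The group-theoretic part of the first condition of Theorem~\ref{Thm:QLeavesLAppears}—that given finitely many pairs $x_i\neq y_i$ there is a finite-index normal subgroup $L\leq K$ separating them—is automatic in $\bbZ$, since $\bbZ$ is abelian (so every subgroup is normal) and residually finite (so finitely many nonzero elements $x_iy_i^{-1}$ can be killed modulo a suitable finite-index subgroup $mK$). The second condition, that every finite-index subgroup $L\leq K$ has more than $2([G:H]+1)|\widetilde E|=4|\widetilde E|$ elements, fails naively because the subgroups of $\bbZ$ are infinite, but that is the point: in $\bbZ$ every nontrivial $L$ has infinite order, so the cardinality condition holds trivially for the finite set $\widetilde E$. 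With all three hypotheses in place, Theorem~\ref{Thm:QLeavesLAppears} yields that $(C\setminus\widetilde E)\cup\widetilde F=D$ is a non-minimal complement in $\bbZ$, i.e.\ a non-MAC.

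The main obstacle, such as it is, is bookkeeping rather than mathematics: one must organize the perturbation so that after adding and removing finitely many points the result is genuinely of the admissible form $(C\setminus\widetilde E)\cup\widetilde F$ with $\widetilde E$ finite and $\widetilde F\subsetneq \bbZ\setminus C$, and confirm that the infinitude of $(\bbZ\setminus C)\setminus F$ forces properness uniformly. Since $D$ was an arbitrary nonempty subset of $\bbZ$ with finite symmetric difference from $(C\setminus E)\cup F$, and each such $D$ is shown to be a non-MAC, Definition~\ref{Defn:Robust} gives that $(C\setminus E)\cup F$ is a robust non-MAC, completing the proof.
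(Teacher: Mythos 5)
Your proposal is correct and follows exactly the route the paper intends: the paper states this corollary without a separate proof, as a direct consequence of Theorem~\ref{Thm:QLeavesLAppears}, and your reduction of an arbitrary finite perturbation $D$ to the admissible form $(C\setminus\widetilde E)\cup\widetilde F$ with $\widetilde E\subseteq C$ finite and $\widetilde F\subsetneq\bbZ\setminus C$ (properness guaranteed uniformly by the infinitude of $(\bbZ\setminus C)\setminus F$), together with the verification that the separation and cardinality hypotheses hold trivially in $\bbZ$, is precisely the intended argument. (Two wording slips---``finite $F'$'' in your opening paragraph, and nonzero elements being ``killed'' rather than \emph{surviving} modulo a finite-index subgroup---are corrected by your own detailed treatment and do not affect the proof.)
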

Moreover, there are infinite sets which are neither bounded below nor above, and do not satisfy the hypothesis of \cite[Theorem 5]{BurcroffLuntzlara}. Indeed, consider the set 
$$(\{1, 2, 3\} + 4\bbZ) \cup \calF \cup \calF_1$$
for any subset $\calF$ of $\calF_0$ where the sets $\calF_0, \calF_1$ are defined by 
$$
\calF_i
= 
4\bbZ \setminus 
\{
(4n)! + 4k \,|\, n\geq 1, n\equiv i \,(\mathrm{mod}\, 2), 0 \leq k \leq n
\}
$$
for $i = 0, 1$. Note that $(\{1, 2, 3\} + 4\bbZ) \cup \calF \cup \calF_1$ does not satisfy the hypothesis of \cite[Theorem 5]{BurcroffLuntzlara}. From Theorem \ref{Thm:FContainedInSingleCoset}, it follows that it is a robust non-MAC. Since $\calF_0$ is an infinite set, it has uncountably many subsets. Thus Theorem \ref{Thm:FContainedInSingleCoset} yields examples of uncountably many robust non-MACs in $\bbZ$, none of them satisfying the hypothesis of \cite[Theorem 5]{BurcroffLuntzlara}. This provides a partial answer to \cite[Question 2]{BurcroffLuntzlara}.  More generally, we establish the following Theorem \ref{Thm:RobustNonMac}, which shows in particular that any finitely generated abelian group of positive rank and any free abelian group of positive rank contain uncountably many robust non-MACs. Moreover, it follows from Theorem \ref{Thm:Cardi} that given any uncountable abelian group $G$, any proper subset $C$ of $G$ having countable set-theoretic complement in $G$ is a robust non-MAC.

In the context of groups which are not assumed to be abelian, the minimal complements are more precisely  called minimal multiplicative complements to indicate the underlying structure of the ambient group. The minimal multiplicative complements are abbreviated as MMCs. In the spirit of robust MACs and non-MACs, one can also define the notion of robust MMCs and robust non-MMCs. 

\begin{definition}
\label{Defn:RobustMMC}
Let $G$ be a group. A subset $C$ of $G$ is said to be a \textnormal{robust MMC} if any non-empty subset $D$ of $G$ having finite symmetric difference with $C$ is a MMC in $G$. A subset $C$ of $G$ is said to be a \textnormal{robust non-MMC} if any non-empty subset $D$ of $G$ having finite symmetric difference with $C$ is a non-MMC in $G$. 
\end{definition}

Note that in the context of abelian groups, the MMCs (resp. robust MMCs, robust non-MMCs) coincide with the MACs (resp. robust MACs, robust non-MACs). 

\begin{theorem}
\label{Thm:RobustNonMac}
Any group that admits $\bbZ$ as a quotient, contains uncountably many robust non-MMCs. 
\end{theorem}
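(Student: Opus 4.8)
The plan is to exhibit the required sets by pulling a standard construction over $\bbZ$ back through the quotient map and then invoking Theorem \ref{Thm:FContainedInSingleCoset}. Fix a surjective homomorphism $\pi\colon G\to\bbZ$ and set $K:=\pi^{\mo}(4\bbZ)$. Since $4\bbZ$ is normal in $\bbZ$, the subgroup $K$ is normal in $G$, one has $[G:K]=4$, and $K$ is infinite because $\pi|_K\colon K\to 4\bbZ$ is onto. Take $H=G$, so $[G:H]=1$, and put $C:=\pi^{\mo}(\bbZ\setminus 4\bbZ)$, so that $C$ is the union of three $K$-cosets and $G\setminus C=K$. Then Assumption \ref{Assumption} holds, since $[C:K]=3>2=2[G:H]\cdot[(G\setminus C):K]$. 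It remains to choose $F\subseteq K$ and verify, for \emph{every} finite $E\subseteq C$, that the three hypotheses of Theorem \ref{Thm:FContainedInSingleCoset} are met, whence each $(C\setminus E)\cup F$ is not a minimal complement.

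Call a subset $\Omega\subseteq K$ \emph{thick} if for every finite $A\subseteq K$ there is some $k\in K$ with $kA\subseteq\Omega$. The two conditions on $F$ in Theorem \ref{Thm:FContainedInSingleCoset} reduce, uniformly over all finite $E$, to a single stability requirement on the complement $\Omega:=K\setminus F$: condition (1) is automatic since $F\subseteq K$ lies in the single coset $K$; condition (2) holds for every finite $E$ exactly when $\Omega$ is infinite (no finite set $X$ can fill the infinite coset $K$, and $F$ meets no other coset); and condition (3) holds for every finite $E$ as soon as $\Omega$ is thick. Indeed, if $E\cdot F^{\mo}$ contained a coset $gK$, then $gK=\bigcup_{e\in E\cap gK}eF^{\mo}$; writing $A:=g^{\mo}(E\cap gK)\subseteq K$, thickness yields $k\in K$ with $kA\subseteq\Omega$, and the element $y:=gk^{\mo}$ satisfies $y^{\mo}e=k(g^{\mo}e)\in kA\subseteq\Omega=K\setminus F$ for each $e\in E\cap gK$, i.e. $y\notin eF^{\mo}$, contradicting $y\in gK$. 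Thus it suffices to build thick, co-infinite subsets $\Omega$ of $K$ (and set $F=K\setminus\Omega$).

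Such $\Omega$ are produced uniformly by pulling back thick, co-infinite subsets of $\bbZ$. If $\Theta\subseteq 4\bbZ$ contains arbitrarily long intervals situated arbitrarily far out (hence is thick in $\bbZ$) and omits an infinite set, then $\Omega:=(\pi|_K)^{\mo}(\Theta)$ is thick and co-infinite in $K$: given finite $A\subseteq K$, a long interval of $\Theta$ placed over $\pi(A)$ furnishes the required translate, while the omitted set lifts to an infinite subset of $K\setminus\Omega$. Both properties survive the removal or addition of finitely many points, since arbitrarily long intervals remain available away from any finite set; this is precisely what robustness requires. Any nonempty $D$ with $Z:=(C\cup F)\triangle D$ finite may be written as $D=(C\setminus(Z\cap C))\cup(F\triangle(Z\cap K))$, so with $E':=Z\cap C$ (finite, inside $C$) and $F':=F\triangle(Z\cap K)$ (differing from $F$ by a finite subset of $K$) the complement $K\setminus F'$ is again thick and co-infinite, and Theorem \ref{Thm:FContainedInSingleCoset} applies verbatim to $D$. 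Hence $C\cup F$ is a robust non-MMC in the sense of Definition \ref{Defn:RobustMMC}; the right-complement case is covered by the clause already built into Theorem \ref{Thm:FContainedInSingleCoset}. To obtain uncountably many, I would fix the interval blocks guaranteeing thickness together with a reserved infinite set guaranteeing co-infiniteness, and let an infinite set $P\subseteq 4\bbZ$ of positions disjoint from both vary freely: for each $J\subseteq P$, the set $\Theta_J$ (the blocks together with $J$) is thick and co-infinite, the sets $F_J=K\setminus(\pi|_K)^{\mo}(\Theta_J)$ are pairwise distinct (they differ on fibres over $P$), and so $\{C\cup F_J\}_{J\subseteq P}$ is an uncountable family of robust non-MMCs.

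The main obstacle is the reformulation of condition (3) in terms of thickness of $\Omega=K\setminus F$, together with arranging that the governing properties persist under \emph{arbitrary} finite modification rather than for a single fixed $E$. Once thickness is isolated as the correct stability notion, the pullback through $\pi$ handles every $G$ uniformly, including those for which $\ker\pi$ is uncountable, by transporting the entire problem to the elementary construction of abundantly many thick, co-infinite subsets of $\bbZ$.
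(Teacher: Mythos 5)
Your proposal is correct, but it takes a genuinely different route from the paper's own proof. Both arguments take $H=G$, pull everything back to $\bbZ$ through the quotient map, and obtain uncountability by letting an arbitrary subset of an infinite index set vary; the differences lie in the modulus, the shape of $F$, and the key theorem invoked. The paper works modulo a prime $p\geq 5$: with $K$ the preimage of $p\bbZ$, $C$ the preimage of $\scrC+p\bbZ$ where $|\scrC|=p-a$ and $p\geq 3a+1$, and $F$ the preimage of an arbitrary subset $\calF$ of $\scrC'+p\bbN$, it invokes Theorem \ref{Thm:CMinusCSymm}; primality supplies condition \eqref{Eqn:Cond} via Proposition \ref{Prop:CMinusCSymmPrimeIndex} (no subgroup strictly between $K$ and $G$ can have $H\setminus C$ as a union of its cosets), while restricting $\calF$ to the ``positive'' fibres makes conditions (2) and (3), and their stability under finite symmetric differences, immediate, since sets such as $E\cdot F^{\mo}$ project to subsets of $\bbZ$ bounded above and so can never cover a full congruence class. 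You instead work modulo $4$ and arrange $H\setminus C=K$ to be a single coset, so that condition (1) of Theorem \ref{Thm:FContainedInSingleCoset} is automatic, and you carry the whole load with your notion of thickness of $\Omega=K\setminus F$: the reduction of condition (3) to thickness (via the witness $y=gk^{\mo}$) is a clean auxiliary lemma with no counterpart in the paper, and your check that thickness and co-infiniteness pull back from $4\bbZ$ and survive finite perturbation (by placing translates over intervals far from the finite projection) is exactly what robustness requires. As for what each buys: the paper's $F$ may meet every coset of $K$ inside $H\setminus C$ but must omit all ``negative'' fibres, and the verification is essentially free of new machinery; your $F$ is confined to the single coset $K$ but can be far larger (it may contain entire fibres of $\pi$, needing only a thick complement), and your thickness criterion isolates the precise stability notion behind robustness, at the cost of proving the extra lemma. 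Both proofs are complete and produce the stated uncountable family of robust non-MMCs.
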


\begin{proof}
Note that there exists a normal subgroup $G'$ of $G$ such that $G/G'$ is isomorphic to $\bbZ$. Let $p\geq 5$ be a prime number. Let $a$ be a positive integer satisfying $p \geq 3a + 1$ and $\scrC$ denote a subset of $\{1, 2, \cdots, p\}$ of size $p-a$. Denote the set $\{1, 2, \cdots, p\}\setminus \scrC$ by $\scrC'$. Let $\psi: G/G' \to \bbZ$ be a group isomorphism. 
Let $K$ denote the subgroup $\psi^\mo (p\bbZ)$ of $G$. Note that $K$ is a normal subgroup of $G$. For any subset $\calF$ of $\scrC' + p\bbN$, the subset $\psi^\mo ((\scrC + p\bbZ )\cup \calF)$ is a robust non-MAC by Proposition \ref{Prop:CMinusCSymmPrimeIndex} and Theorem \ref{Thm:CMinusCSymm}. Since the set $\scrC' + p\bbN$ contains infinitely many elements, it has uncountably many subsets. Thus the group $G$ contains uncountably many robust non-MACs. 
\end{proof}

\begin{corollary}
For any number field $K$ of degree $\geq 3$, the group $\gln_n(\mathcal O_K)$ contains uncountably many robust non-MMCs where $\calO_K$ denotes the ring of integers of $K$. 
\end{corollary}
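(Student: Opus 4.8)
The plan is to reduce the statement to Theorem \ref{Thm:RobustNonMac} by exhibiting a surjection from $\gln_n(\calO_K)$ onto $\bbZ$. First I would recall that for any commutative ring $R$, the determinant gives a surjective group homomorphism $\det \colon \gln_n(R) \to R^\times$, surjectivity being witnessed by sending a unit $u$ to $\mathrm{diag}(u, 1, \dots, 1)$. Applying this with $R = \calO_K$, it suffices to show that the unit group $\calO_K^\times$ admits $\bbZ$ as a quotient: a surjection $\gln_n(\calO_K) \to \calO_K^\times$ followed by a surjection $\calO_K^\times \to \bbZ$ yields a surjection $\gln_n(\calO_K) \to \bbZ$, so that $\gln_n(\calO_K)$ admits $\bbZ$ as a quotient and Theorem \ref{Thm:RobustNonMac} applies.

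The key step is to invoke Dirichlet's unit theorem, which gives an isomorphism $\calO_K^\times \cong \mu_K \times \bbZ^{r_1 + r_2 - 1}$, where $\mu_K$ is the finite group of roots of unity in $K$, and $r_1, r_2$ denote the number of real embeddings and the number of conjugate pairs of complex embeddings of $K$, so that $r_1 + 2r_2 = [K:\bbQ]$. Thus $\calO_K^\times$ surjects onto $\bbZ$ precisely when its free rank $r_1 + r_2 - 1$ is at least $1$.

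Next I would verify that the hypothesis $\deg K = [K:\bbQ] \geq 3$ forces $r_1 + r_2 - 1 \geq 1$. Writing $d = r_1 + 2r_2 \geq 3$, one minimizes $r_1 + r_2 = d - r_2$ by taking $r_2$ as large as possible, namely $r_2 = \lfloor d/2 \rfloor$, which makes $r_1 + r_2$ equal to $\lceil d/2 \rceil \geq 2$ for every $d \geq 3$. Hence $r_1 + r_2 - 1 \geq 1$, so $\calO_K^\times$—and therefore $\gln_n(\calO_K)$—admits $\bbZ$ as a quotient, completing the reduction.

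I do not anticipate a genuine obstacle here; the argument is an assembly of standard facts. The only point requiring care is the rank bound, and this is precisely where the degree hypothesis enters: for degree $2$ the claim would fail for imaginary quadratic fields, where $(r_1, r_2) = (0, 1)$ yields unit rank $0$. Checking that no field of degree $\geq 3$ can have $r_1 + r_2 = 1$ is the small case-verification carried out above, and it is the sole reason the hypothesis $\deg K \geq 3$ is imposed.
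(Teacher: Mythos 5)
Your proposal is correct and follows essentially the same route as the paper: reduce via the determinant surjection $\gln_n(\calO_K) \twoheadrightarrow \calO_K^\times$, then use Dirichlet's unit theorem to get $\bbZ$ as a quotient of $\calO_K^\times$ when $[K:\bbQ] \geq 3$, and apply Theorem \ref{Thm:RobustNonMac}. You merely spell out the rank computation $r_1 + r_2 - 1 \geq 1$ explicitly, which the paper leaves implicit.
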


\begin{proof}
Note that the group $\gln_n(\calO_K)$ admits $\calO_K^\times$ as a quotient. Since $K$ has degree $\geq 3$, by the Dirichlet's unit theorem, $\calO_K^\times$ admits $\bbZ$ as a quotient. Hence $\gln_n(\calO_K)$ admits $\bbZ$ as a quotient. By Theorem \ref{Thm:RobustNonMac}, the result follows. 
\end{proof}

\begin{corollary}
Any finitely generated abelian group of positive rank and any free abelian group of positive rank contain uncountably many robust non-MACs.
\end{corollary}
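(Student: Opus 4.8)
The plan is to deduce this corollary directly from Theorem \ref{Thm:RobustNonMac}. That theorem already produces uncountably many robust non-MMCs in any group admitting $\bbZ$ as a quotient, and (as noted just before Definition \ref{Defn:RobustMMC}) in an abelian group the robust non-MMCs coincide with the robust non-MACs. So the entire task reduces to verifying the purely elementary fact that each of the two classes of groups in question admits $\bbZ$ as a quotient. There is essentially no genuine obstacle here; all the real work lives in Theorem \ref{Thm:RobustNonMac} and in Proposition \ref{Prop:CMinusCSymmPrimeIndex} and Theorem \ref{Thm:CMinusCSymm} feeding into it.

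First I would handle the finitely generated case. Let $G$ be a finitely generated abelian group of positive rank. By the structure theorem for finitely generated abelian groups, $G \cong \bbZ^r \oplus T$ where $r \geq 1$ (this is exactly the assertion that the rank is positive) and $T$ is the finite torsion subgroup. Composing the isomorphism with the projection onto a single free coordinate yields a surjective homomorphism $G \twoheadrightarrow \bbZ$, so $G$ admits $\bbZ$ as a quotient. Applying Theorem \ref{Thm:RobustNonMac} then shows that $G$ contains uncountably many robust non-MMCs, which are robust non-MACs since $G$ is abelian.

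Next I would treat the free abelian case, which is even more direct and does not require finite generation. Let $G$ be a free abelian group of positive rank, so $G = \bigoplus_{i\in I}\bbZ$ for some nonempty index set $I$. Fixing any $i_0\in I$, the coordinate projection onto the $i_0$-th factor is a surjection $G \twoheadrightarrow \bbZ$, so again $G$ admits $\bbZ$ as a quotient and Theorem \ref{Thm:RobustNonMac} applies verbatim.

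In both cases the groups are abelian, so the uncountably many robust non-MMCs supplied by Theorem \ref{Thm:RobustNonMac} are precisely uncountably many robust non-MACs, which is the claim. The only point meriting any care is the identification of ``positive rank'' with the existence of a $\bbZ$ quotient, and in each case this is immediate from the relevant structural description of the group.
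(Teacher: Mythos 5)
Your proof is correct and is essentially the paper's own argument: the paper proves this corollary with the single line ``It follows from Theorem \ref{Thm:RobustNonMac},'' leaving implicit exactly the verifications you spell out (the structure theorem giving a $\bbZ$ direct summand in the finitely generated case, a coordinate projection in the free case, and the identification of robust non-MMCs with robust non-MACs in abelian groups). Your added detail is accurate and fills in the routine steps the paper omits.
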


\begin{proof}
It follows from Theorem \ref{Thm:RobustNonMac}.
\end{proof}

It seems plausible that any infinite abelian group contains uncountably many robust non-MACs. 

We conclude this section with the following remarks. 

\begin{remark}
If a subset $A$ of $H$ (for example, the sets of form $C\cup F$ considered in Section \ref{Sec:NonMinComp}) is not a minimal left complement in $G$, then so are its left translates, i.e., the sets of the form $g\cdot A$ for any $g\in G$. 
\end{remark}

\begin{remark}
Note that the subsets which are shown to be non-minimal complements are not a part of any co-minimal pair\footnote{A pair $(A, B)$ of nonempty subsets of a group $G$ is called a \textit{co-minimal pair} if $A$ is a minimal left complement to $B$ and $B$ is a minimal right complement to $A$ \cite[Definition 1.1]{CoMin1}.}. By Theorem \ref{Thm:RobustNonMac}, any finitely generated abelian group of positive rank contains uncountably many infinite subsets which are robust non-MACs and in particular, not minimal complements. We contrast this result with \cite[Theorem 2.2]{CoMin3}, which states that any such group also contains uncountably many infinite subsets which admit minimal complements. 
In \cite{CoMin3}, we considered lacunary sequences\footnote{A sequence $t_0 < t_1 < t_2 < \cdots $ of elements of $\bbZ^d$ is said to be \textit{lacunary} if 
$t_0 > 0$ and for some positive integer $\lambda \geq 2$, $t_n > \lambda t_{n-1}$ for any $n\geq 1$, where ``$>$'' denotes the lexicographic order on $\bbZ^d$.
} in $\bbZ^d$ for $d\geq 1$, and proved that ``a majority'' of such sequences are a part of a co-minimal pair, and in particular, they are minimal complements \cite[Theorem 2.1]{CoMin3}. It also follows that any such sequence remain a minimal complement even after the removal of finitely many points. It would be interesting to investigate whether ``a majority'' of such sequences are robust MACs. It follows from \cite[Theorem 4]{BurcroffLuntzlara} that it is indeed the case when $d = 1$.

\end{remark}

It would be interesting to consider the situations when the sets $C, F$ (as in Section \ref{Sec:NonMinComp}) are somewhat modified. For instance, 
\begin{enumerate}
\item 
What happens if $C$ is taken to be a ``large'' set in $H$ and $F$ is taken to be a ``small'' set lying outside $H$ (or more generally, to be a ``small'' set in $G\setminus C$)? 
\item What happens if $C$ intersects with several right cosets of $H$ in $G$ and the intersection of $C$ with each such (or one such) right coset is ``large''?
\end{enumerate}

\section{Acknowledgements}
The first author is supported by the ISF Grant no. 662/15. He wishes to thank the Department of Mathematics at the Technion where a part of the work was carried out. The second author would like to acknowledge the Initiation Grant from the Indian Institute of Science Education and Research Bhopal, and the INSPIRE Faculty Award from the Department of Science and Technology, Government of India.

\def\cprime{$'$} \def\Dbar{\leavevmode\lower.6ex\hbox to 0pt{\hskip-.23ex
  \accent"16\hss}D} \def\cfac#1{\ifmmode\setbox7\hbox{$\accent"5E#1$}\else
  \setbox7\hbox{\accent"5E#1}\penalty 10000\relax\fi\raise 1\ht7
  \hbox{\lower1.15ex\hbox to 1\wd7{\hss\accent"13\hss}}\penalty 10000
  \hskip-1\wd7\penalty 10000\box7}
  \def\cftil#1{\ifmmode\setbox7\hbox{$\accent"5E#1$}\else
  \setbox7\hbox{\accent"5E#1}\penalty 10000\relax\fi\raise 1\ht7
  \hbox{\lower1.15ex\hbox to 1\wd7{\hss\accent"7E\hss}}\penalty 10000
  \hskip-1\wd7\penalty 10000\box7}
  \def\polhk#1{\setbox0=\hbox{#1}{\ooalign{\hidewidth
  \lower1.5ex\hbox{`}\hidewidth\crcr\unhbox0}}}
\providecommand{\bysame}{\leavevmode\hbox to3em{\hrulefill}\thinspace}
\providecommand{\MR}{\relax\ifhmode\unskip\space\fi MR }
\providecommand{\MRhref}[2]{%
  \href{http://www.ams.org/mathscinet-getitem?mr=#1}{#2}
}
\providecommand{\href}[2]{#2}

\end{document}